\newtheorem{theorem}{Theorem}
\newtheorem{lemma}[theorem]{Lemma}
\newtheorem{corollary}[theorem]{Corollary}
\newtheorem{proposition}[theorem]{Proposition}
\newtheorem{remark}[theorem]{Remark}
\newcommand{\bbR}{\mathbb{R}}
\newcommand{\calG}{\mathcal{G}}
\newcommand{\calV}{\mathcal{V}}
\newcommand{\calE}{\mathcal{E}}
\newcommand{\Vin}{\mathcal{V}_{\mathrm{in}}}
\newcommand{\Vout}{\mathcal{V}_{\mathrm{out}}}
\newcommand{\Vfix}{\mathcal{V}_{\mathrm{fix}}}
\newcommand{\Vctrl}{\mathcal{V}_{\mathrm{ctrl}}}
\newcommand{\Vint}{\mathcal{V}_{\mathrm{int}}}
\newcommand{\matB}{\mathbf{B}}     
\newcommand{\matC}{\mathbf{C}}     
\newcommand{\matL}{\mathbf{L}}     
\newcommand{\vecc}{\mathbf{c}}     %
\newcommand{\vecd}{\mathbf{d}}     %
\newcommand{\vecu}{\mathbf{u}}     
\newcommand{\vecq}{\mathbf{q}}     
\newcommand{\vecx}{\mathbf{x}}     %
\newcommand{\vecPhi}{\bm{\Phi}}    
\newcommand{\vecg}{\mathbf{g}}     
\newcommand{\uzero}{\vecu_0}
\newcommand{\Umap}{\mathbf{U}_g}
\newcommand{\qzero}{\vecq_0}
\newcommand{\Qmap}{\mathbf{Q}_g}
\newcommand{\Phizero}{\vecPhi_0}
\newcommand{\Pmap}{\mathbf{P}_g}
\newcommand{\Efix}{\mathbf{E}_{\mathrm{fix}}}
\newcommand{\Ectrl}{\mathbf{E}_{\mathrm{ctrl}}}
\newcommand{\Eint}{\mathbf{E}_{\mathrm{int}}}
\DeclareMathOperator{\diag}{diag}
\newcommand{\ones}{\mathbf{1}}
\title{Optimal Boundary Control of Diffusion on Graphs via Linear Programming \thanks{This work is partially supported by the Office of Naval Research (ONR) under Award NO: N00014-24-
1-2147, NSF grant DMS-2408877, and the Air Force Office of Scientific Research (AFOSR) under Award NO: FA9550-25-1-0231.}}
	\author[1]{Harbir Antil\thanks{Email: \texttt{hantil@gmu.edu}}}	
	\author[2]{Rainald L\"ohner\thanks{Email: \texttt{rlohner@gmu.edu}}}
        \author[3]{Felipe P\'erez\thanks{Email: \texttt{fperezsi@gmu.edu}}}
	\affil[1,3]{\small{Department of Mathematical Sciences and the Center for Mathematics and Artificial Intelligence (CMAI), George Mason University, Fairfax, VA 22030, USA.}}
	\affil[2]{\small{Department of Physics and the Computational Fluid Dynamics Center, George Mason University, Fairfax, VA 22030, USA.}}
\date{}
\begin{document}
\maketitle


\vspace{-1cm}

\begin{abstract}
We propose a linear programming (LP) framework for steady-state diffusion and flux optimization on geometric networks. 
The state variable satisfies a discrete diffusion law on a weighted, oriented graph, where conductances are scaled by edge lengths to preserve geometric fidelity. 
Boundary potentials act as controls that drive interior fluxes according to a linear network Laplacian. 
The optimization problem enforces physically meaningful sign and flux-cap constraints at all boundary edges, derived directly from a gradient bound $|\nabla u|\le\phi_{\max}$. 
This yields a finite-dimensional LP whose feasible set is polyhedral, and whose boundedness and solvability follow from simple geometric or algebraic conditions on the network data.

We prove that under the absence of negative recession directions—automatically satisfied in the presence of finite box bounds, flux caps, or sign restrictions—the LP admits a global minimizer. 
Several sufficient conditions guaranteeing boundedness of the feasible region are identified, covering both full-rank and rank-deficient flux maps. 
The analysis connects classical results such as the Minkowski–Weyl decomposition, Hoffman’s bound, and the fundamental theorem of linear programming with modern network-based diffusion modeling.

Two large-scale examples illustrate the framework: 
(i) A typical large stadium in a major modern city, 
which forms a single connected component
with relatively uniform corridor widths, and a 
(ii) A complex street network emanating from a large, 
historical city center, which forms a multi-component system.
 
In both cases, the LP formulation yields stable and physically consistent flux fields that satisfy sign and conservation constraints to within double-precision tolerance. 
The approach generalizes naturally to other diffusion-controlled phenomena such as thermal transport, chemical dispersion, or pedestrian flow in complex geometric environments.
\end{abstract}

\section{Introduction}

Many planning, transport, and diffusion processes on geometric networks can be formulated as steady-state boundary-control problems. 
The goal is to prescribe or optimize boundary potentials (Dirichlet data) so that the resulting flux field achieves a desired steady-state configuration inside the network. 
Examples include heat transport through composite materials, contaminant propagation in pipe systems, and pedestrian or vehicular flow in constrained infrastructures such as stadiums, terminals, or urban layouts. 
In all such systems, the governing equations are linear, yet the control variables—boundary conditions—are subject to physical and geometric constraints that must be respected. 
Optimizing these boundary controls thus leads naturally to \emph{PDE-constrained optimization} problems posed on graphs.

\vspace{0.3em}
This work develops a general \emph{linear programming (LP)} formulation for such diffusion-control problems on weighted, oriented graphs. 
Each edge $e=(t(e),h(e))$ (tail and head, respectively) is assigned a length $L_e$ and a cross-sectional weight $k_e$, defining a conductance 
\[
c_e = k_e / L_e 
\]
consistent with Fick’s or Fourier’s law. 
The discrete flux $q_e = -c_e(u_{h(e)} - u_{t(e)})$ links vertex potentials $u_{t(e)}$ and $u_{h(e)}$, and vertex-level balance enforces discrete conservation. 
Boundary vertices are divided into inflow and outflow sets $(\mathcal{V}_{\text{in}}, \mathcal{V}_{\text{out}})$, and the boundary potentials $\vecg$ serve as controls. 
The interior state is determined uniquely by solving a discrete Laplace equation, producing an \emph{affine control-to-state mapping}
\[
(\vecu, \vecq, \vecPhi)
  = (\vecu_0, \vecq_0, \vecPhi_0)
  + (\mathbf{U}, \mathbf{Q}, \mathbf{P})\, \vecg.
\]
This affine reduction converts the continuous diffusion control problem into a finite-dimensional LP with explicit geometric and physical inequality constraints.

\paragraph{Modeling.}
Three structural elements distinguish the present framework:
\begin{enumerate}[(i)]\itemsep0pt
  \item \textbf{Edge-level sign constraints.}
  Backflow is eliminated by enforcing orientation-aware sign restrictions on boundary fluxes, ensuring outwardness and physical admissibility.
  \item \textbf{Flux caps from gradient bounds.}
  A maximum gradient constraint $|\partial_s u|\le\phi_{\max}$ implies per-edge flux caps $|q_e|\le k_e\phi_{\max}$, yielding linear inequalities in $\vecg$.
  \item \textbf{Affine elimination of the state.}
  The linear diffusion operator permits exact elimination of interior variables, resulting in a polyhedral feasible set and a globally convex optimization problem.
\end{enumerate}

\paragraph{Mathematical framework.}
The feasible region of the LP
\[
P = \{\vecg \in \mathbb{R}^n : A \vecg \le b,\; \vecg_{\min} \le \vecg \le \vecg_{\max}\}
\]
is a convex polyhedron defined by affine flux relations and capacity bounds. 
We show that the objective functional, being linear in $\vecg$, attains a global minimum whenever no negative recession direction $\vecd \in \operatorname{rec}(P)$ satisfies $\vecc^\top \vecd < 0$ (see~\eqref{eq:cdef} for definition of $\vecc$). 
This condition, verified using the Minkowski–Weyl decomposition of polyhedra, guarantees existence of an optimal solution without assuming boundedness \emph{a priori}. 
Practical sufficient conditions for boundedness arise naturally:
finite box bounds, full-rank flux maps with two-sided caps, or rank-deficient flux maps regularized by sign constraints. 
These results connect geometric arguments from convex analysis and linear programming—such as Hoffman’s error bound \cite{AJHoffman_1952a}, 
Rockafellar’s theory of convex sets \cite{rockafellar1970convex}, and Schrijver’s foundational work on polyhedral theory \cite{ASchrijver_1986a}—
to diffusion control on networks.

\paragraph{Algorithmic formulation.}
After eliminating the interior state, the optimization problem reduces to
\[
\min_{\vecg}\; -(\vecc^\star)^\top\vecg
\quad\text{s.t.}\quad
A_{\mathrm{cap}}\vecg \le b_{\mathrm{cap}},\;
A_{\mathrm{edge}}\vecg \le b_{\mathrm{edge}},\;
\vecg_{\min} \le \vecg \le \vecg_{\max},
\]
where $(A_{\mathrm{edge}}, b_{\mathrm{edge}})$ encode per-edge sign constraints with a small slack $\varepsilon \ge 0$, 
and $(A_{\mathrm{cap}}, b_{\mathrm{cap}})$ enforce $|q_e| \le \phi_{\max}k_e$.
This LP can be solved directly using standard solvers, and the affine control–state map allows post-solution reconstruction of all nodal potentials and fluxes. 
The LP structure ensures global optimality, simple feasibility checks, and numerical robustness without requiring penalty parameters or nonlinear relaxations.

\paragraph{Connections to prior work.} 
Diffusion and flow on graphs are classical subjects \cite{chung1997spectral,doyle2000random,boyd2004convex}, 
and linear programming has long been used for network-flow optimization \cite{ahuja1993network,DBertsimas_JNTsitsiklis_1997a,ziegler1995lectures}. 
However, traditional approaches typically optimize edge flows or capacities directly. 
Our formulation differs in two key respects:  
(i) fluxes arise from a diffusive (Ohmic) relation rather than from independent capacity constraints; and  
(ii) the optimization acts on \emph{boundary potentials}, not on edge injections, while enforcing sign constraints that preserve physical outwardness. 
On the analysis side, solvability and boundedness follow from convex-analytic principles 
(recession cones, Hoffman bounds) rather than combinatorial flow arguments.  
In the context of evacuation modeling, this framework is consistent with continuum potential-flow theories (e.g., \cite{hughes2002continuum}) but operates natively on networks, 
which is essential when only a line-graph abstraction is available. We also refer to \cite{MSchmidt_FMHante_2023a}, and references therein, for gas transportation on networks using PDE constrained optimization frameworks.

\paragraph{Contributions.}
\begin{itemize}\itemsep0pt
  \item A unified LP formulation for boundary-control diffusion problems with edge-level sign and flux-cap constraints.
  \item A geometric proof of existence and boundedness using recession cones and Minkowski–Weyl decomposition.
  \item Simple sufficient conditions for automatic boundedness under physical modeling assumptions.
  \item Numerical validation on two real-world networks confirming conservation, sign correctness, and geometric consistency.
\end{itemize}

\section{Model on a metric graph}
Let $\calG=(\mathcal V,\mathcal E)$ be a finite connected graph. Each edge 
$e\in\mathcal E$ is a 1D segment of length $L_e>0$ with constant conductivity $k_e>0$,
oriented from its tail $t(e)$ to head $h(e)$. 
We are given disjoint boundary sets: inflows $\Vin$ and outflows $\Vout$. The user chooses a fixed subset $\Vfix\subseteq\Vin$ with prescribed temperatures $u_{\mathrm{fix}}$. All remaining boundary nodes are controls:
\begin{equation*}
  \Vctrl := (\Vin\cup\Vout)\setminus\Vfix,
  \qquad \vecg := \vecu\big|_{\Vctrl}\in\bbR^{|\Vctrl|}.
\end{equation*}
Special cases: $\Vfix=\emptyset$ (all boundary are controls) or $\Vctrl=\emptyset$ (no optimization).
We denote nodal temperatures by $u_v$ for $v\in\mathcal V$ and edgewise temperatures by $u_e(x)$ for $x\in(0,L_e)$
with $x=0$ at $t(e)$ and $x=L_e$ at $h(e)$.

On edge $e$, the steady diffusion equation holds
\begin{equation*}
  -\dv{}{x}\!\left(k_e \, \dv{u}{x}\right)=0
\end{equation*}
The solution is affine: $u_e(x)=u_{t(e)} + \frac{x}{L_e}\,(u_{h(e)}-u_{t(e)})$.
The (signed) constant flux along $e$ (positive in the $t(e)\to h(e)$ direction) is
\begin{equation*}
q_e \;:=\; -\,k_e\,\frac{d u_e}{dx} \;=\; -\,\frac{k_e}{L_e}\,\big(u_{h(e)}-u_{t(e)}\big) 
= -c_e \,\big(u_{h(e)}-u_{t(e)}\big) ,
\end{equation*}
with edge conductance $c_e := k_e/L_e$. 

At node $v\in\calV$, the nodal flux balance (inflow minus outflow) is
\begin{equation*}
  \Phi_v \;:=\; \sum_{e \in \mathcal{E}:\,h(e)=v} q_e \;-\; \sum_{e\in \mathcal{E}:\,t(e)=v} q_e,
  \qquad \sum_{v\in\calV}\Phi_v=0.
\end{equation*}
Interior nodes enforce conservation: $\Phi_v=0$ for all $v\notin \Vin\cup\Vout$.

\section{Discrete operators and shapes}
Index nodes by $1, \dots, n_V$ and orient each edge with a tail and head. We use an \emph{edge-by-node} incidence matrix $\matB\in\bbR^{n_E\times n_V}$:
\begin{equation}\label{eq:coincidence_matrix}
  (\matB)_{e,v} = \begin{cases}
  -1, & v=t(e),\\
  +1, & v=h(e),\\
  0, & \text{otherwise.}
  \end{cases}
\end{equation}
\begin{figure}[htb]
\centering
\begin{tikzpicture}[scale=1.0]
  \node[circle,draw,fill=cyan!25,inner sep=1.8pt,label=left:{\small $v\in\Vin$}] (vin) at (0,0) {};
  \node[circle,draw,inner sep=1.8pt] (mid) at (3.0,0) {};
  \node[circle,draw,fill=magenta!25,inner sep=1.8pt,label=right:{\small $w\in\Vout$}] (vout) at (6.0,0) {};
  \draw[-{Latex[length=3mm]},line width=0.9pt] (vin) -- node[above]{\small tail$\to$head} (mid);
  \draw[-{Latex[length=3mm]},line width=0.9pt] (mid) -- (vout);
  \draw[-{Latex[length=2.5mm]},line width=1.2pt,blue] (0.9,0.75) -- (1.9,0.75) node[midway,above] {\small $q_e$};
  \draw[-{Latex[length=2.5mm]},line width=1.2pt,blue] (4.0,0.75) -- (5.0,0.75);
  \node at (0,-0.9) {\small $\Phi_v=\sum_{\mathrm{head}=v}q_e-\sum_{\mathrm{tail}=v}q_e$};
  \node at (6.0,-0.9) {\small $\Phi_w=\sum_{\mathrm{head}=w}q_e-\sum_{\mathrm{tail}=w}q_e$};
\end{tikzpicture}
\caption{Orientation and signs. Incidence $\matB$ is edge-by-node with $-1$ at tail and $+1$ at head. Then $\vecq=-\matC\matB\vecu$ and $\vecPhi=\matB^\top\vecq$ implement the definitions compactly.}
\end{figure}

Let $\matC=\diag(c_e)\in\bbR^{n_E\times n_E}$. For a nodal vector $\vecu\in\bbR^{n_V}$,
\begin{equation}\label{eq:discLap}
\begin{aligned}
  &\text{edge fluxes:}\quad \vecq \;=\; -\,\matC\,\matB\,\vecu \in\bbR^{n_E}, \\  
  &\text{nodal balances:}\quad \vecPhi \;=\; \matB^\top \vecq \;=\; -\,\matB^\top \matC \matB\,\vecu \;=:\; -\matL\,\vecu,
\end{aligned}  
\end{equation}
where $\matL:=\matB^\top\matC\matB\in\bbR^{n_V\times n_V}$ is the weighted graph Laplacian.
To summarize, we have: $\matB \in \mathbb{R}^{n_E{\times}n_V}$, $\matC \in \mathbb{R}^{n_E{\times}n_E}$, $\matL \in \mathbb{R}^{n_V{\times}n_V}$, $\vecu \in \mathbb{R}^{n_V{\times}1}$, $\vecq \in \mathbb{R}^{n_E{\times}1}$, $\vecPhi \in \mathbb{R}^{n_V{\times}1}$.

\paragraph{Embedding matrices.}
For any node index set $S\subset\{1,\dots,n_V\}$, the embedding $\mathbf{E}_S\in\bbR^{n_V\times |S|}$ places a subvector at the global indices of $S$. We will use $\Efix$ (for $\Vfix$), $\Ectrl$ (for $\Vctrl$), and $\Eint$ (for $\Vint:=\{1,\dots,n_V\}\setminus(\Vfix\cup\Vctrl)$).

\section{Mixed Dirichlet boundary and affine maps}
Write the nodal state as
\begin{equation}
  \vecu \;=\; \Eint\,\vecu_{\mathrm{int}} \;+\; \Efix\,\vecu_{\mathrm{fix}} \;+\; \Ectrl\,\vecg .
\end{equation}
Interior conservation ($\vecPhi|_{\Vint}=0$) gives
\begin{equation}
  0 \;=\; -\Eint^\top \matL \vecu \;=\; -\Big(\underbrace{\Eint^\top\matL\Eint}_{\matL_{ii}}\vecu_{\mathrm{int}} + \underbrace{\Eint^\top\matL\Efix}_{\matL_{i,\mathrm{fix}}}\vecu_{\mathrm{fix}} + \underbrace{\Eint^\top\matL\Ectrl}_{\matL_{i,\mathrm{ctrl}}}\vecg \Big).
\end{equation}
Assuming each connected component has at least one Dirichlet node (fixed or controlled), $\matL_{ii}\succ 0$ (see below) and
\begin{equation}
  \vecu_{\mathrm{int}}(\vecg) \;=\; \underbrace{-\matL_{ii}^{-1}\matL_{i,\mathrm{fix}}\vecu_{\mathrm{fix}}}_{A_0} \;+\; \underbrace{-\matL_{ii}^{-1}\matL_{i,\mathrm{ctrl}}}_{A_1}\,\vecg .
\end{equation}
Reconstruct the full state
\begin{equation}\label{eq:ug}
  \vecu(\vecg) \;=\; \uzero \;+\; \Umap \vecg, 
  \quad \uzero := \Eint A_0 + \Efix \vecu_{\mathrm{fix}}, 
  \quad \Umap := \Eint A_1 + \Ectrl .
\end{equation}
Then
\begin{equation}\label{eq:qg}
\begin{aligned}
  \vecq(\vecg) &\;=\; \qzero + \Qmap\vecg \;:=\; -\matC\matB\,\uzero \;+\; \big(-\matC\matB\,\Umap\big)\vecg, 
  \\ 
  \vecPhi(\vecg) &\;=\; \Phizero + \Pmap\vecg \;:=\; \matB^\top \qzero \;+\; \big(\matB^\top\Qmap\big)\vecg .
\end{aligned}  
\end{equation}
For boundary blocks, define
\begin{equation}\label{eq:Phig}
\begin{aligned}
  \vecPhi_{\mathrm{in}}(\vecg)=\Phizero|_{\Vin}+K_{\mathrm{in}}\vecg,\quad K_{\mathrm{in}}:=\Pmap(\Vin,:), 
  \\
  \vecPhi_{\mathrm{out}}(\vecg)=\Phizero|_{\Vout}+K_{\mathrm{out}}\vecg,\quad K_{\mathrm{out}}:=\Pmap(\Vout,:).
\end{aligned}  
\end{equation}

We prove a few straightforward results which are critical to justify the above calculations, in-particular the invertabilty of $\matL_{ii}$. Towards this end, let $\mathbf{1}_{\mathcal G_i}\in\mathbb{R}^{n_V}$ denotes the indicator vector of the
$i$-th connected component $\mathcal G_i$ of the graph, defined by
\[
(\mathbf{1}_{\mathcal G_i})_v =
\begin{cases}
1, & \text{if vertex } v \in \mathcal G_i,\\[2mm]
0, & \text{otherwise.}
\end{cases}
\]
Each $\mathbf{1}_{\mathcal G_i}$ is constant on its component and vanishes elsewhere,
so that the nullspace of the weighted Laplacian $\matL=\matB^\top\matC\matB$
is precisely $\ker(\matL)=\mathrm{span}\{\mathbf{1}_{\mathcal G_1},\dots,
\mathbf{1}_{\mathcal G_K}\}$ which we characterize next. 
\begin{lemma}[Weighted Laplacian structure and nullspace]\label{lem:lap-kernel}
Let $\matB\in\bbR^{n_E\times n_V}$ be the edge-by-node incidence as given in \eqref{eq:coincidence_matrix}.  
Let $\matC=\diag(c_e)\succ 0$ with edge conductances $c_e>0$, and define $\matL$ as in \eqref{eq:discLap}. 
Then:
\begin{enumerate}[(a)]
  \item $\matL$ is symmetric positive semidefinite (SPSD).
  \item If the graph has $K$ connected components $\calG_1,\dots,\calG_K$, then
  \[
    \ker(\matL) \;=\; \mathrm{span}\{\mathbf{1}_{\calG_1},\dots,\mathbf{1}_{\calG_K}\},
  \]
  i.e., the kernel is spanned by the indicator vectors of the components (constant on each component, zero elsewhere).
\end{enumerate}
\end{lemma}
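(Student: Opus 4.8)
The plan is to derive both parts from the factorization $\matL=\matB^\top\matC\matB$ together with the explicit action of the incidence matrix on a nodal vector. First, symmetry is immediate: since $\matC=\diag(c_e)$ is diagonal and therefore symmetric, $\matL^\top=\matB^\top\matC^\top\matB=\matB^\top\matC\matB=\matL$. For positive semidefiniteness I would compute the quadratic form directly. Using the incidence convention \eqref{eq:coincidence_matrix}, one has $(\matB\vecu)_e=u_{h(e)}-u_{t(e)}$, so for every $\vecu\in\bbR^{n_V}$,
\[
  \vecu^\top\matL\vecu=(\matB\vecu)^\top\matC(\matB\vecu)=\sum_{e\in\calE}c_e\,\bigl(u_{h(e)}-u_{t(e)}\bigr)^2\ge 0,
\]
where nonnegativity uses $c_e>0$. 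This establishes part (a).

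For part (b), the key observation is that $\matL$ factors as $\matL=\matR^\top\matR$ with $\matR:=\matC^{1/2}\matB$ (well defined since $\matC\succ0$), so that $\ker(\matL)=\ker(\matR)$ and hence the nullspace coincides with the zero set of the quadratic form: $\matL\vecu=0\iff\vecu^\top\matL\vecu=0$. By the identity derived above, this holds if and only if $c_e\bigl(u_{h(e)}-u_{t(e)}\bigr)^2=0$ for every edge, i.e. $u_{h(e)}=u_{t(e)}$ across each edge. Thus $\vecu\in\ker(\matL)$ precisely when $\vecu$ assigns equal values to the two endpoints of every edge.

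The remaining step—which I expect to be the only genuinely combinatorial part—is to promote this edgewise constancy to component-wise constancy. I would argue that if $u_{h(e)}=u_{t(e)}$ for all edges, then for any two vertices $v,w$ in the same connected component a connecting path $v=v_0,v_1,\dots,v_m=w$ exists, and equality across each consecutive edge yields $u_v=u_w$ by transitivity; hence $\vecu$ is constant on each $\calG_i$. Conversely, any vector that is constant on each component automatically satisfies the edge condition, since both endpoints of an edge lie in a common component. The space of such vectors is exactly $\mathrm{span}\{\mathbf{1}_{\calG_1},\dots,\mathbf{1}_{\calG_K}\}$, and these indicator vectors are linearly independent because their supports are pairwise disjoint. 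This identifies $\ker(\matL)$ with the claimed span and pins its dimension at $K$, completing part (b).
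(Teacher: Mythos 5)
Your proof is correct and follows essentially the same route as the paper: symmetry plus the quadratic form $\vecu^\top\matL\vecu=\sum_e c_e\bigl(u_{h(e)}-u_{t(e)}\bigr)^2$ for part (a), and vanishing of that form forcing edgewise equality, promoted to componentwise constancy by a path argument, for part (b). Your explicit factorization $\matL=\matR^\top\matR$ with $\matR=\matC^{1/2}\matB$ is a slightly more careful justification of the step $\matL\vecu=0\iff\vecu^\top\matL\vecu=0$, which the paper uses implicitly, but it is the same argument in substance.
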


\begin{proof}
\emph{(a) SPSD.} Symmetry is immediate: $\matL^\top=(\matB^\top\matC\matB)^\top=\matB^\top\matC\matB=\matL$ since $\matC$ is diagonal.
For any $\vecu\in\bbR^{n_V}$,
\[
  \vecu^\top \matL \vecu \;=\; \vecu^\top \matB^\top \matC \matB \vecu
  \;=\; (\matB\vecu)^\top \matC (\matB\vecu)
  \;=\; \sum_{e\in\calE} c_e\,\big((\matB\vecu)_e\big)^2 \;\ge\; 0,
\]
so $\matL$ is positive semidefinite.

\smallskip
\noindent\emph{(b) Kernel characterization.}
We have $\vecu\in\ker(\matL)$ iff $\matL\vecu= \mathbf{0}$, i.e.,
\[
  0 \;=\; \vecu^\top \matL \vecu \;=\; \sum_{e\in\calE} c_e\,\big((\matB\vecu)_e\big)^2,
\]
which, since $c_e>0$, is equivalent to $(\matB\vecu)_e=0$ for all edges $e$.
But $(\matB\vecu)_e = \vecu_{h(e)} - \vecu_{t(e)}$ by construction, so $\vecu_{h(e)}=\vecu_{t(e)}$ for every edge.
Therefore $\vecu$ is \emph{constant on each connected component}: along any path inside a component, adjacent nodes must have equal values.
Conversely, any vector that is constant on each component satisfies $\matB\vecu= \mathbf{0}$, hence $\matL\vecu= \mathbf{0}$.
The set of such vectors is precisely $\mathrm{span}\{\mathbf{1}_{\calG_1},\dots,\mathbf{1}_{\calG_K}\}$.
\end{proof}

The next result discusses the invertability of $\matL_{ii}$ and states the affine dependence of $\vecu$, $\vecq$, and $\vecPhi$ on $\vecg$. 
\begin{theorem}[Forward solve]\label{thm:forward}
If each connected component of $\calG$ contains at least one Dirichlet node (fixed or controlled), then the interior block
\[
  \matL_{ii} \;=\; \Eint^\top \matL \Eint
\]
is symmetric positive definite and the interior system has a unique solution. Consequently
$\vecu(\vecg)$, $\vecq(\vecg)$, and $\vecPhi(\vecg)$ depend affinely on $\vecg$.
\end{theorem}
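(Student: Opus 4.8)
The plan is to establish the three assertions in order, with \Cref{lem:lap-kernel} doing the decisive work. Symmetry of $\matL_{ii}=\Eint^\top\matL\Eint$ is immediate: since $\matL^\top=\matL$ by part (a) of the lemma, $(\Eint^\top\matL\Eint)^\top=\Eint^\top\matL\Eint$. Positive semidefiniteness is equally direct, since for any $\vecx\in\bbR^{|\Vint|}$ I can write $\vecx^\top\matL_{ii}\vecx=(\Eint\vecx)^\top\matL(\Eint\vecx)\ge 0$. The real content is upgrading this to strict positive definiteness.

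For that, I would suppose $\vecx^\top\matL_{ii}\vecx=0$ and show $\vecx=\mathbf{0}$. The identity above forces $(\Eint\vecx)^\top\matL(\Eint\vecx)=0$, so $\Eint\vecx\in\ker(\matL)$. By part (b) of \Cref{lem:lap-kernel}, $\Eint\vecx$ must then be constant on each connected component $\calG_i$. But by construction the embedding $\Eint$ places $\vecx$ only at interior indices and is zero at every Dirichlet node, so $(\Eint\vecx)_v=0$ for all $v\in\Vfix\cup\Vctrl$. The hypothesis that each component contains at least one such Dirichlet node then pins the component-wise constant to zero, giving $\Eint\vecx=\mathbf{0}$. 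Since $\Eint$ has full column rank (its columns are distinct standard basis vectors), this yields $\vecx=\mathbf{0}$, proving $\matL_{ii}\succ 0$.

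With $\matL_{ii}$ symmetric positive definite it is invertible, so the interior balance equation admits the unique solution $\vecu_{\mathrm{int}}(\vecg)=A_0+A_1\vecg$ with $A_0=-\matL_{ii}^{-1}\matL_{i,\mathrm{fix}}\vecu_{\mathrm{fix}}$ and $A_1=-\matL_{ii}^{-1}\matL_{i,\mathrm{ctrl}}$ as displayed. The remaining affine claims then follow mechanically: substituting into $\vecu=\Eint\vecu_{\mathrm{int}}+\Efix\vecu_{\mathrm{fix}}+\Ectrl\vecg$ produces the affine map $\vecu(\vecg)=\uzero+\Umap\vecg$ of \eqref{eq:ug}, and composing with the fixed linear operators $-\matC\matB$ and $\matB^\top(-\matC\matB)$ preserves affinity, delivering $\vecq(\vecg)=\qzero+\Qmap\vecg$ and $\vecPhi(\vecg)=\Phizero+\Pmap\vecg$ exactly as in \eqref{eq:qg}.

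I expect the only genuine obstacle to be the positive-definiteness step, and specifically the interplay between the kernel characterization and the boundary structure: one must check that forcing $\Eint\vecx$ to vanish at a single Dirichlet node per component suffices to kill the whole constant mode, which is precisely where the connectedness-plus-Dirichlet hypothesis is consumed. Everything else reduces to bookkeeping with the embedding matrices and to the elementary fact that composition of affine with linear maps stays affine.
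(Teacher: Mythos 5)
Your proof is correct and follows essentially the same route as the paper's: the quadratic form $\vecx^\top\matL_{ii}\vecx=(\Eint\vecx)^\top\matL(\Eint\vecx)$ vanishing forces $\Eint\vecx$ to be constant on each component, the Dirichlet node per component pins that constant to zero, and injectivity plus PSD gives positive definiteness, with affinity following from \eqref{eq:ug}--\eqref{eq:Phig}. The only cosmetic difference is that you pass through $\Eint\vecx\in\ker(\matL)$ (using the standard PSD fact that a vanishing quadratic form implies kernel membership) and then cite Lemma~\ref{lem:lap-kernel}(b), whereas the paper re-derives constancy directly from $\sum_e c_e((\matB\Eint\vecx)_e)^2=0$.
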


\begin{proof}
By Lemma~\ref{lem:lap-kernel}, $\ker(\matL)$ is spanned by componentwise constants.
Consider $x\in\bbR^{|\Vint|}$ with $\matL_{ii}x= \mathbf{0}$. Let $\tilde x := \Eint x$ be the vector on all nodes that equals $x$ on $\Vint$ and zero on Dirichlet nodes $D:=\Vfix\cup\Vctrl$.
Then
\[
  0 \;=\; x^\top \matL_{ii} x \;=\; (\Eint x)^\top \matL (\Eint x)
  \;=\; \sum_{e\in\calE} c_e\,\big((\matB\,\Eint x)_e\big)^2,
\]
so $\matB\,\Eint x= \mathbf{0}$ and $\Eint x$ is constant on each connected component.
But on any component that contains at least one Dirichlet node, $\Eint x$ is zero at that Dirichlet node by construction (since $\Eint$ places zeros on $D$). The only constant function that is zero at some node is the zero constant, hence $\Eint x= \mathbf{0}$ on that component. Because \emph{every} component has at least one Dirichlet node by assumption, $\Eint x= \mathbf{0}$ globally, so $x=\mathbf{0}$.
Thus $\matL_{ii}$ is injective; being symmetric PSD, it is positive definite. Uniqueness and affinity of $(\vecu,\vecq,\vecPhi)$ in $\vecg$ follow \eqref{eq:ug}, \eqref{eq:qg}, and \eqref{eq:Phig}.
\end{proof}

\begin{corollary}[Necessity of anchoring each component]\label{cor:necessity}
If some connected component contains no Dirichlet node (neither fixed nor controlled), then the interior block $\matL_{ii}$ is singular. The forward problem on that component is only determined up to an additive constant, and the overall state is not unique.
\end{corollary}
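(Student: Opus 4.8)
The plan is to prove the two assertions separately: first the singularity of $\matL_{ii}$ by exhibiting an explicit nonzero null vector, and then the non-uniqueness by showing that the unanchored component decouples from the rest of the interior system and carries a free additive constant. Both steps reuse the kernel characterization of Lemma~\ref{lem:lap-kernel}. The core observation is that if a connected component $\calG_j$ contains no Dirichlet node, then every vertex of $\calG_j$ lies in $\Vint$, since the Dirichlet set is exactly $\Vfix\cup\Vctrl$ and the interior is its complement.

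For singularity, I would let $x\in\bbR^{|\Vint|}$ be the interior-indexed indicator of $\calG_j$, i.e.\ $x_v=1$ for the interior indices corresponding to vertices of $\calG_j$ and $x_v=0$ otherwise. Because all of $\calG_j$ is interior, the embedding reproduces the global component indicator, $\Eint x=\mathbf{1}_{\calG_j}$. By Lemma~\ref{lem:lap-kernel}(b), $\mathbf{1}_{\calG_j}\in\ker(\matL)$, so
\[
  \matL_{ii}\,x \;=\; \Eint^\top \matL\, \Eint x \;=\; \Eint^\top \matL\, \mathbf{1}_{\calG_j} \;=\; \Eint^\top \mathbf{0} \;=\; \mathbf{0}.
\]
Since $x\ne\mathbf{0}$, the matrix $\matL_{ii}$ has a nontrivial kernel and is therefore singular. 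This is the easy half, and it mirrors the contrapositive of the argument in Theorem~\ref{thm:forward}, where the presence of a Dirichlet node in every component was exactly what forced such an indicator to vanish.

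For non-uniqueness, I would invoke the block structure of $\matL$ with respect to connected components. Since $\calG_j$ has no edges to the remaining graph, the rows of $\matL$ indexed by $\calG_j$ involve only columns indexed by $\calG_j$; as those vertices are all interior and carry no Dirichlet data, the corresponding rows of the coupling blocks $\matL_{i,\mathrm{fix}}$ and $\matL_{i,\mathrm{ctrl}}$ vanish. Hence the interior equation restricted to $\calG_j$ reduces to the homogeneous system $\matL_{\calG_j}\vecu_{\calG_j}=\mathbf{0}$, where $\matL_{\calG_j}$ is the (connected) Laplacian of that component. Its nullspace is one-dimensional, spanned by the constant vector, so $\vecu_{\calG_j}$ is determined only up to an additive constant; equivalently, any solution $\vecu_{\mathrm{int}}$ of the full interior system may be shifted by $t\,x$ for arbitrary $t\in\bbR$ without changing the right-hand side. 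Thus the forward state is not unique.

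The main obstacle is the decoupling step: one must confirm that, for the rows associated with the unanchored component, every coupling term to Dirichlet (fixed or controlled) nodes genuinely vanishes. This is precisely the statement that the weighted Laplacian is block-diagonal across connected components, an immediate consequence of $(\matB\vecu)_e = u_{h(e)}-u_{t(e)}$ involving only the two endpoints of a single edge, hence a single component. It must nonetheless be stated cleanly in order to legitimately reduce to the single-component Laplacian and to rule out that the prescribed boundary data could pin down the free constant. Once this is in place, the one-dimensional nullspace of a connected Laplacian (Lemma~\ref{lem:lap-kernel}(b) applied to $\calG_j$ alone) delivers the additive constant and the failure of uniqueness.
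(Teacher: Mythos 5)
Your proposal is correct and its first half is exactly the paper's proof: the paper also exhibits the interior-indexed indicator of the unanchored component (which embeds back to $\mathbf{1}_{\calG_j}\in\ker(\matL)$ because every vertex of that component lies in $\Vint$) as a nonzero vector in $\ker(\matL_{ii})$, and stops there. Your second half --- the block-diagonality of $\matL$ across components, the vanishing of the $\calG_j$-rows of $\matL_{i,\mathrm{fix}}$ and $\matL_{i,\mathrm{ctrl}}$, and hence the homogeneous restricted system with a one-dimensional constant nullspace --- is a sound elaboration that the paper leaves implicit, and it has the added merit of confirming solvability of the singular interior system (the right-hand side is orthogonal to the kernel vector), so that ``not unique'' rather than ``possibly no solution'' is the correct conclusion.
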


\begin{proof}
If a component has no Dirichlet node, the corresponding constant vector lies in $\ker(\matL)$ and has support entirely on that component. Its restriction to $\Vint$ is nonzero and lies in $\ker(\matL_{ii})$, so $\matL_{ii}$ is not invertible.
\end{proof}

\begin{remark}[Practical interpretation]
Dirichlet nodes (fixed \emph{or} controlled) \emph{anchor} the potential on each connected component by removing the constant-mode nullspace of $\matL$. Without at least one anchor per component, the “DC offset” is free (physically: only gradients are determined), making the linear system underdetermined and potentially causing numerical issues or unbounded directions in optimization.
\end{remark}

\section{Objective: maximize outward boundary flux}
Outward means $\vecPhi_{\mathrm{in}}\le 0$ at inflows (out of boundary into interior) and $\vecPhi_{\mathrm{out}}\ge 0$ at outflows. We maximize the net outward boundary flux
\begin{equation}
\label{eq:cdef}
  J(\vecg) \;=\; -\ones^\top \vecPhi_{\mathrm{in}}(\vecg) \;+\; \ones^\top \vecPhi_{\mathrm{out}}(\vecg)
  \;=\; \text{const} + \underbrace{\Big(\begin{bmatrix}K_{\mathrm{in}}\\K_{\mathrm{out}}\end{bmatrix}^\top \begin{bmatrix}-\ones\\ \ones\end{bmatrix}\Big)}_{=:~\vecc^\star} \vecg .
\end{equation}
Since typical optimization solvers, e.g., \texttt{linprog} minimizes, we solve the following minimization problem:
$$
\min_{\vecg} -(\vecc^\star)^\top \vecg .
$$
We are still not done specifying the optimization problem fully.

\section{No-backflow constraints}

\paragraph{Edge-level (per-edge) sign constraints.}
For each boundary endpoint $v$ and adjacent edge $e=(\mathrm{tail}\to\mathrm{head})$, outwardness implies:
\[
\begin{array}{lll}
v\in\Vin:  & t(e)=v \Rightarrow q_e \ge -\varepsilon, \quad h(e)=v \Rightarrow q_e \le +\varepsilon,\\[1mm]
v\in\Vout: & h(e)=v \Rightarrow q_e \ge -\varepsilon, \quad t(e)=v \Rightarrow q_e \le +\varepsilon.
\end{array}
\]
Collect these as $S\vecq \ge -\varepsilon$ with selector $S\in\bbR^{m\times n_E}$ (each row picks $\pm q_e$). With $\vecq(\vecg)=\qzero+\Qmap\vecg$,
\begin{equation*}
  S(\qzero+\Qmap\vecg) \ge -\varepsilon
  \quad\Longleftrightarrow\quad
  (-S\Qmap)\,\vecg \le S\qzero + \varepsilon .
\end{equation*}
Here we denote
\begin{equation}\label{eq:Aedge}
A_{\mathrm{edge}} := -S\Qmap, 
\qquad 
b_{\mathrm{edge}} := S\qzero + \varepsilon ,
\end{equation}
so that the edge sign constraints compactly read
\[
A_{\mathrm{edge}}\,\vecg \;\le\; b_{\mathrm{edge}}.
\]
Each row of $A_{\mathrm{edge}}$ corresponds to one boundary edge condition,
with the sign pattern determined by the selector matrix $S$.
Edges between same-type boundary nodes (Vin--Vin or Vout--Vout) receive two opposite constraints, forcing $q_e\simeq 0$ (no boundary shunt).

The next result states the impact of slack $\varepsilon$ on the total flux. The shunt case will be discussed in the result following this result.
\begin{lemma}[Edge rules imply node sign constraints up to slack]\label{lem:edge-to-node}
Let $v$ be a boundary node with degree $\deg(v)$ (number of incident edges). Impose the edge-level outwardness rules with slack $\varepsilon\ge 0$:
\[
\begin{array}{lll}
v\in\Vin:  & t(e)=v \Rightarrow q_e \ge -\varepsilon,\quad h(e)=v \Rightarrow q_e \le +\varepsilon,\\[1mm]
v\in\Vout: & h(e)=v \Rightarrow q_e \ge -\varepsilon,\quad t(e)=v \Rightarrow q_e \le +\varepsilon.
\end{array}
\]
Then the nodal flux balance satisfies
\[
\boxed{~
\begin{aligned}
v\in\Vin &: \quad \Phi_v \;\le\; \varepsilon\,\deg(v),\\
v\in\Vout&: \quad \Phi_v \;\ge\; -\,\varepsilon\,\deg(v).
\end{aligned}
}
\]
In particular, for $\varepsilon=0$ the node-level sign constraints $\Phi_v\le 0$ on $\Vin$ and $\Phi_v\ge 0$ on $\Vout$ follow from the edge rules.
\end{lemma}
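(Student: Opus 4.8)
The plan is to bound $\Phi_v$ directly, summand by summand, by pairing each term of the nodal balance with the edge rule that governs it. Recall that
\[
\Phi_v \;=\; \sum_{e:\,h(e)=v} q_e \;-\; \sum_{e:\,t(e)=v} q_e,
\]
and that the edges incident to $v$ partition into head-edges ($h(e)=v$) and tail-edges ($t(e)=v$), so that their counts sum to $\deg(v)$ (assuming no self-loops, which hold no information here since they contribute $q_e-q_e=0$).

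For $v\in\Vin$ I would bound the two sums separately. Each head-edge obeys $q_e\le\varepsilon$, so the first sum is at most $\varepsilon$ times the number of head-edges; each tail-edge obeys $q_e\ge-\varepsilon$, hence $-q_e\le\varepsilon$, so the subtracted second sum contributes at most $\varepsilon$ times the number of tail-edges. Adding the two term-wise bounds and using that the head-count plus the tail-count equals $\deg(v)$ yields $\Phi_v\le\varepsilon\,\deg(v)$. For $v\in\Vout$ the argument is the exact mirror image: the head rule $q_e\ge-\varepsilon$ now lower-bounds the first sum, while the tail rule $q_e\le\varepsilon$ gives $-q_e\ge-\varepsilon$ for the subtracted sum, and summing produces $\Phi_v\ge-\varepsilon\,\deg(v)$. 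The final sentence of the lemma follows by simply setting $\varepsilon=0$, which collapses both inequalities to the pure sign constraints $\Phi_v\le0$ on $\Vin$ and $\Phi_v\ge0$ on $\Vout$.

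The only point requiring attention—hardly a genuine obstacle—is keeping the orientation-dependent signs consistent: for each of the four signed summand types (head/tail crossed with $\Vin$/$\Vout$) one must select the edge rule whose inequality, after accounting for the $\pm$ appearing in the definition of $\Phi_v$, points in the direction needed (an upper bound for $\Vin$, a lower bound for $\Vout$). Once the four rules are matched correctly, the estimate holds edge-by-edge with no cancellation or summation subtlety, so the conclusion is immediate.
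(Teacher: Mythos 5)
Your proposal is correct and follows essentially the same argument as the paper: bound each sum in $\Phi_v=\sum_{h(e)=v}q_e-\sum_{t(e)=v}q_e$ term by term using the matching edge rule, then combine the head-edge and tail-edge counts into $\deg(v)$. The remark about self-loops is a harmless extra observation; otherwise the two proofs coincide.
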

\begin{proof}
Recall $\Phi_v=\sum_{h(e)=v} q_e - \sum_{t(e)=v} q_e$.\\
If $v\in\Vin$, each incident edge with $h(e)=v$ obeys $q_e\le +\varepsilon$ and each with $t(e)=v$ obeys $q_e\ge-\,\varepsilon$, hence
\[
\Phi_v \;=\; \sum_{h(e)=v} q_e - \sum_{t(e)=v} q_e
\;\le\; \#\{h(e)=v\}\,\varepsilon \;-\;\big(-\#\{t(e)=v\}\,\varepsilon\big)
\;=\; \varepsilon\,\deg(v).
\]
If $v\in\Vout$, then $h(e)=v\Rightarrow q_e\ge-\,\varepsilon$ and $t(e)=v\Rightarrow q_e\le+\,\varepsilon$, so
\[
\Phi_v \;\ge\; -\,\#\{h(e)=v\}\,\varepsilon \;-\;\#\{t(e)=v\}\,\varepsilon
\;=\; -\,\varepsilon\,\deg(v).
\]
Setting $\varepsilon=0$ yields $\Phi_v\le 0$ on $\Vin$ and $\Phi_v\ge 0$ on $\Vout$. \qedhere
\end{proof}

\begin{proposition}[Boundary–boundary edges are blocked]\label{prop:bb-block}
If an edge $e$ connects two boundary nodes of the \emph{same type}, i.e., $t(e),h(e)\in\Vin$ or $t(e),h(e)\in\Vout$, then the edge rules imply
\[
\boxed{\; |q_e| \;\le\; \varepsilon. \;}
\]
Thus for $\varepsilon=0$ such edges carry no flux (no boundary shunting).
\end{proposition}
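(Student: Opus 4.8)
The plan is to observe that when both endpoints of an edge $e$ are boundary nodes of the \emph{same} type, the single flux variable $q_e$ is simultaneously subject to two of the edge-level sign constraints—one triggered by the tail endpoint and one by the head endpoint—and that these two inequalities sandwich $q_e$ inside $[-\varepsilon,\varepsilon]$. No state reconstruction or affine map is needed; the claim follows purely from the rules listed just above the proposition.

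First I would treat the case $t(e),h(e)\in\Vin$. Since $t(e)\in\Vin$ and $e$ has $t(e)$ as its tail, the $\Vin$ tail-rule applies and yields $q_e\ge-\varepsilon$. Independently, since $h(e)\in\Vin$ and $e$ has $h(e)$ as its head, the $\Vin$ head-rule applies and yields $q_e\le+\varepsilon$. Combining the two gives $-\varepsilon\le q_e\le\varepsilon$, i.e.\ $|q_e|\le\varepsilon$.

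Next I would handle the symmetric case $t(e),h(e)\in\Vout$ identically, now invoking the $\Vout$ rules: the tail endpoint forces $q_e\le+\varepsilon$ and the head endpoint forces $q_e\ge-\varepsilon$, again producing $|q_e|\le\varepsilon$. Setting $\varepsilon=0$ in either case collapses the interval to $\{0\}$ and forces $q_e=0$, which is the stated no-shunt conclusion.

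There is essentially no analytic obstacle; the result is immediate once one notices that a same-type boundary edge contributes two opposing rows to the system $S\vecq\ge-\varepsilon$ (exactly the ``two opposite constraints'' flagged in the remark preceding the statement). The only point requiring care is orientation bookkeeping: one must pair the correct tail-rule and head-rule to the given edge orientation so that in each case one inequality is a genuine upper bound $q_e\le\varepsilon$ and the other a genuine lower bound $q_e\ge-\varepsilon$, rather than two copies of the same bound. Verifying this for both $\Vin$ and $\Vout$ edges confirms that the pair always brackets $q_e$ from both sides.
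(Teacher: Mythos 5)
Your proposal is correct and matches the paper's own proof exactly: both cases are handled by pairing the tail-rule lower bound $q_e\ge-\varepsilon$ with the head-rule upper bound $q_e\le+\varepsilon$ (with the roles of the bounds swapped for $\Vout$), yielding $|q_e|\le\varepsilon$ immediately. Your extra care about orientation bookkeeping is sound but adds nothing beyond the paper's two-line argument.
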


\begin{proof}
For $t(e),h(e)\in\Vin$: the tail rule gives $q_e\ge -\varepsilon$ and the head rule gives $q_e\le +\varepsilon$, hence $|q_e|\le \varepsilon$. The Vout–Vout case is identical.
\end{proof}

\begin{remark}[Practical takeaway]
Lemma~\ref{lem:edge-to-node} shows that adding edge-level rules with a small slack
$\varepsilon$ \emph{automatically} enforces the desired node-level sign constraints
up to $O(\varepsilon)$, while Proposition~\ref{prop:bb-block} prevents artificial
``short-circuits'' along boundary--boundary edges. In implementation, these
constraints appear as the linear inequality
\[
A_{\mathrm{edge}}\,\vecg \le b_{\mathrm{edge}},
\quad
A_{\mathrm{edge}}=-S\Qmap,\quad
b_{\mathrm{edge}}=S\qzero+\varepsilon,
\]
where each row of $S$ encodes the sign pattern for one boundary edge.
Choosing $\varepsilon$ at or slightly above the solver tolerance
(e.g.\ $10^{-6}$ in physical units) effectively removes cosmetic backflow
without altering the physical solution.
\end{remark}

\section{Flux caps from a gradient bound}
A gradient cap $\abs{u'}\le \phi_{\max}$ on edge $e$ implies
\begin{equation*}
  \abs{q_e} = k_e\,\abs{u'} \le \phi_{\max}\,k_e = \phi_{\max}\,c_e L_e.
\end{equation*}
If raw conductivities $k_e$ are provided (as in data), take $q_{\max,e}=\phi_{\max}k_e$ (equivalently $\phi_{\max}c_eL_e$). In matrix form:
\begin{equation*}
  -\vecq_{\max} \le \qzero+\Qmap\vecg \le \vecq_{\max}
  \;\Longleftrightarrow\;
  \begin{bmatrix} \Qmap \\ -\Qmap \end{bmatrix}\vecg \le
  \begin{bmatrix} \vecq_{\max}-\qzero \\ \vecq_{\max}+\qzero \end{bmatrix}.
\end{equation*}
For compactness, we define
\[
A_{\mathrm{cap}} :=
\begin{bmatrix}
\Qmap \\[1mm] -\Qmap
\end{bmatrix},
\qquad
b_{\mathrm{cap}} :=
\begin{bmatrix}
\vecq_{\max}-\qzero \\[1mm]
\vecq_{\max}+\qzero
\end{bmatrix},
\]
so that the flux‐cap inequalities take the standard form
\[
A_{\mathrm{cap}}\,\vecg \;\le\; b_{\mathrm{cap}}.
\]

\section{Linear program}

Collecting all components, the optimization problem reads
\begin{equation}
\label{eq:LP_form}
\begin{aligned}
  \min_{\vecg \in \mathbb{R}^{|\Vctrl|}} 
  &\quad J(\vecg) := -(\vecc^\star)^\top \vecg,\\
  \text{s.t.}\quad 
  & A_{\mathrm{cap}}\,\vecg \le b_{\mathrm{cap}}, \\[1mm]
  & A_{\mathrm{edge}}\,\vecg \le b_{\mathrm{edge}}
      \qquad \text{(edge no-backflow with slack $\varepsilon$)},\\[1mm]
  & \vecg_{\min} \le \vecg \le \vecg_{\max}
      \qquad \text{(optional box bounds).}
\end{aligned}
\end{equation}
Here all constraints are affine, so the feasible set is an intersection of finitely many halfspaces and slabs. To prove this result, we assume that the objective function is bounded below.
\begin{theorem}[Convexity and existence of an optimum]
\label{thm:exist}
The feasible region of~\eqref{eq:LP_form} is a convex polyhedron, and the objective is linear;
therefore the problem is convex. 
If the feasible set is nonempty and the objective is bounded below, a global minimizer exists.
\end{theorem}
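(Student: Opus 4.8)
The plan is to establish the three claimed properties in sequence: convexity of the feasible set, linearity of the objective, and existence of a global minimizer under the stated hypotheses. The first two are essentially structural observations, while the existence claim is the substantive part.

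First I would argue that the feasible region is a convex polyhedron. Each constraint block in~\eqref{eq:LP_form}---the cap inequalities $A_{\mathrm{cap}}\vecg\le b_{\mathrm{cap}}$, the edge inequalities $A_{\mathrm{edge}}\vecg\le b_{\mathrm{edge}}$, and the box bounds $\vecg_{\min}\le\vecg\le\vecg_{\max}$---defines a finite intersection of closed half-spaces in $\bbR^{|\Vctrl|}$. I would stack all of these into a single system $\tilde A\vecg\le\tilde b$ by rewriting the two-sided box bounds as $\vecg\le\vecg_{\max}$ and $-\vecg\le-\vecg_{\min}$. The feasible set $P=\{\vecg:\tilde A\vecg\le\tilde b\}$ is then a polyhedron by definition, and I would note that any such set is convex since each half-space is convex and convexity is preserved under intersection. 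Next, since $J(\vecg)=-(\vecc^\star)^\top\vecg$ is an affine (indeed linear) functional of $\vecg$, it is simultaneously convex and concave; minimizing a linear objective over a convex set is a convex program, establishing the first assertion.

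For existence, the natural route is the fundamental theorem of linear programming: a linear program with a nonempty feasible region and an objective bounded below over that region attains its minimum at a vertex (or more generally at some feasible point) of the polyhedron. I would invoke this via the Minkowski--Weyl decomposition alluded to in the paper, writing $P=\mathrm{conv}\{v_1,\dots,v_p\}+\mathrm{rec}(P)$, where the $v_j$ are finitely many points and $\mathrm{rec}(P)=\{\vecd:\tilde A\vecd\le 0\}$ is the recession cone. Boundedness below of $J$ on $P$ forces $\vecc^\top\vecd\ge 0$ for every $\vecd\in\mathrm{rec}(P)$ (here $\vecc=-\vecc^\star$), since otherwise moving along a negative recession direction would drive $J\to-\infty$. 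Given this, the infimum of $J$ over $P$ equals the minimum of the affine function over the compact polytope $\mathrm{conv}\{v_1,\dots,v_p\}$, which is attained by continuity, and the attaining point lifts to a minimizer of $J$ on all of $P$.

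The main obstacle is less a technical difficulty than a matter of rigor in handling the recession cone: I must be careful that the hypothesis ``bounded below'' is exactly the condition that rules out negative recession directions, and that no further nonemptiness or closedness subtlety intervenes. Since $P$ is a closed polyhedron and $J$ is continuous, and since the Minkowski--Weyl theorem guarantees the finite generating set $\{v_j\}$ exists, the argument closes cleanly. I would emphasize that this route deliberately avoids assuming $P$ is bounded \emph{a priori}; the paper's subsequent sufficient conditions (finite box bounds, full-rank flux maps with two-sided caps, etc.) are precisely the practical mechanisms that guarantee either boundedness of $P$ or the absence of negative recession directions, so here I only need the abstract equivalence. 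The only genuine care required is to state the fundamental theorem of linear programming (or equivalently the Minkowski--Weyl decomposition) with the correct hypotheses so that boundedness below alone, together with nonemptiness, suffices for attainment.
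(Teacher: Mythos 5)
Your proposal is correct and follows essentially the same route as the paper's proof: stack the constraints into one system, show that boundedness below forces $\vecc^\top\vecd\ge 0$ for every $\vecd$ in the recession cone, then apply the Minkowski--Weyl decomposition to reduce attainment to the compact polytope $\operatorname{conv}(V)\subseteq P$. The only (minor) difference is that by invoking the general Minkowski--Weyl theorem, in which $V$ is a finite nonempty generating set whenever $P\neq\varnothing$ rather than the set of extreme points, you sidestep the paper's separate case $V=\varnothing$ (polyhedra containing lines), which the paper handles with an additional face argument.
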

\begin{proof}
Let the decision variable be $\vecg \in \mathbb{R}^n$ and write the constraints compactly as
\[
A\vecg \le b, \qquad 
\vecg_{\min} \le \vecg \le \vecg_{\max}.
\]
The feasible set
\[
P := \{\vecg : A\vecg \le b,\;
\vecg_{\min} \le \vecg \le \vecg_{\max}\}
\]
is an intersection of finitely many closed halfspaces (and possibly slabs, i.e., intersections of two halfspaces); hence $P$ is a closed convex
polyhedron \cite{rockafellar1970convex}. 
Moreover, $J(\vecg) = \vecc^\top \vecg$ is linear and therefore convex.

Assume $P\neq\emptyset$ and $\inf_{\vecg\in P} \vecc^\top \vecg > -\infty$.
Define the recession cone
\[
\mathrm{rec}(P)
= \Bigg\{\vecd : A\vecd \le 0,\;
\begin{array}{l}
d_i \ge 0 \ \text{if } (g_{\min})_i > -\infty \text{ and } (g_{\max})_i=+\infty,\\
d_i \le 0 \ \text{if } (g_{\max})_i < +\infty \text{ and } (g_{\min})_i=-\infty,\\
d_i = 0 \ \text{if both } (g_{\min})_i,(g_{\max})_i \text{ are finite}
\end{array}
\Bigg\},
\]
with no sign restriction when both bounds are infinite.
Then the standard equivalence holds:
\[
\vecd\in \mathrm{rec}(P)
\quad\Longleftrightarrow\quad
\vecg+\tau \vecd\in P\ \ \text{for every }\vecg\in P\text{ and every }\tau\ge 0.
\]
($\Rightarrow$) From $A\vecd\le 0$ and $A\vecg\le b$,
$A(\vecg+\tau\vecd)\le b$ for all $\tau\ge 0$. The coordinatewise sign/zero rules ensure the bounds are preserved.
($\Leftarrow$) If $\vecg+\tau\vecd\in P$ for all $\tau\ge 0$, then
$A\vecg+\tau A\vecd\le b$ implies $A\vecd\le 0$ by dividing by $\tau$ and letting $\tau\to\infty$; the same “for all $\tau$” argument forces the stated sign/zero conditions on~$\vecd$.

If there existed $\vecd\in\mathrm{rec}(P)$ with $\vecc^\top \vecd<0$, then
$J(\vecg+\tau \vecd)=\vecc^\top \vecg + \tau\,\vecc^\top \vecd \to -\infty$, contradicting boundedness from below.
Hence
\[
\vecc^\top \vecd \;\ge\; 0\qquad\text{for all }\vecd\in\mathrm{rec}(P).
\]

By the Minkowski--Weyl representation (e.g., \cite[Cor.~2.7]{DBertsimas_JNTsitsiklis_1997a}),
there exist finite sets $V$ (points) and $R$ (rays) such that
$P=\operatorname{conv}(V)+\operatorname{cone}(R)$ and $\operatorname{rec}(P)=\operatorname{cone}(R)$.
Any $\vecx\in P$ can be written $\vecx=\sum_i\lambda_i v^i+\sum_j\mu_j r^j$ with
$\lambda_i\ge0$, $\sum_i\lambda_i=1$, $\mu_j\ge0$. Then
\[
\vecc^\top \vecx=\sum_i\lambda_i\,\vecc^\top v^i+\sum_j\mu_j\,\vecc^\top r^j
\ \ge\ \sum_i\lambda_i\,\vecc^\top v^i
\ \ge\ \inf_{\mathbf{v}\in\operatorname{conv}(V)} \vecc^\top \mathbf{v},
\]
since $\vecc^\top r^j\ge 0$ for all rays. The reverse inequality holds by taking $\mu\equiv 0$, so
\[
\inf_{\vecx\in P} \vecc^\top \vecx \;=\; \inf_{\mathbf{v}\in\operatorname{conv}(V)} \vecc^\top \mathbf{v}.
\]
If $V\neq\varnothing$, the feasible region $P$ has at least one vertex. Since $P$ is a closed polyhedron and the objective function $\vecc^\top \vecg$ is a continuous linear function, the minimum value is guaranteed to exist due to the boundedness-from-below assumption. The optimal value is attained at one of the vertices of the feasible region, a consequence of the fundamental theorem of linear programming.
$
    \inf_{\vecx\in P} \vecc^\top \vecx = \min_{\mathbf{v} \in V} \vecc^\top \mathbf{v}.
$
The convex hull $\operatorname{conv}(V)$ is a compact set (a bounded polytope) and the objective function is linear, so the minimum is attained at an extreme point (a vertex).

If $V=\varnothing$, the polyhedron $P$ contains no vertices (extreme points). Since $P\neq\varnothing$, this implies that $P$ is an unbounded polyhedron that contains a line. The existence of a finite optimal value, $\inf_{\vecg\in P} \vecc^\top \vecg = \alpha > -\infty$, guarantees that the optimal solution set is non-empty and closed. This optimal set is a face of the polyhedron $P$, defined by $P\cap\{\vecg: \vecc^\top \vecg=\alpha\}$. The minimum is therefore attained on this face, even though no vertices exist.
\end{proof}

Next, we prove a general result, which shows that for $\vecd \in \mbox{rec}(P)$ with $\vecc^\top \vecd \ge 0$ then $J(\vecg)$ is bounded below. In order to apply this result to our setting of Theorem~\ref{thm:exist}, we will provide sufficient conditions in Proposition~\ref{prop:boundedness} which guarantees that $\vecc^\top \vecd \ge 0$. 

We begin by stating the well-known Hoffman bound \cite{AJHoffman_1952a} and \cite[pp.~175]{MHong_JZJuo_2017a}. 
\begin{lemma}[Hoffman bound for polyhedra]
\label{lem:hoffman}
Let $\mathcal{F}=\{x\in\mathbb{R}^n: Mx\le q\}$ be nonempty, where $M\in\mathbb{R}^{p\times n}$, $q\in\mathbb{R}^p$.
For any norm $\|\cdot\|$ on $\mathbb{R}^n$, there exists a constant $H=H(M,\|\cdot\|)\in(0,\infty)$ such that
for every $x\in\mathbb{R}^n$,
\[
\mathrm{dist}(x,\mathcal{F})
\;\le\;
H\,\big\| (Mx-q)_+ \big\|_\infty,
\]
where $(\cdot)_+$ is the componentwise positive part.
\end{lemma}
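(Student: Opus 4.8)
The plan is to produce the bound with an explicit constant assembled from \emph{finitely many} subsystems of the rows of $M$, since this finiteness is exactly what makes $H$ uniform in $x$. I work first with the Euclidean norm $\|\cdot\|_2$ and transfer to an arbitrary norm $\|\cdot\|$ at the very end by equivalence of norms on $\mathbb{R}^n$, absorbing the equivalence constants into $H$. If $x\in\mathcal F$ then $(Mx-q)_+=0$ and $\mathrm{dist}(x,\mathcal F)=0$, so the inequality is trivial; hence assume $x\notin\mathcal F$, so that $t:=\|(Mx-q)_+\|_\infty>0$.

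First I would introduce the nearest point. Since $\mathcal F$ is nonempty, closed, and convex and $\|\cdot\|_2$ is coercive, the Euclidean projection $\bar x:=P_{\mathcal F}(x)$ exists and $\mathrm{dist}(x,\mathcal F)=\|x-\bar x\|_2$. The variational characterization $\langle x-\bar x,\,y-\bar x\rangle\le 0$ for all $y\in\mathcal F$ says precisely that $x-\bar x$ lies in the normal cone $N_{\mathcal F}(\bar x)$. For a polyhedron this cone is generated (with no constraint qualification needed) by the active constraint normals, so with active set $J(\bar x)=\{i:M_i\bar x=q_i\}$ one has $x-\bar x=\sum_{i\in J(\bar x)}\lambda_i M_i^\top$ with all $\lambda_i\ge 0$.

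The crux is the reduction to a well-conditioned subsystem. By Carath\'eodory's theorem applied to the cone generated by $\{M_i^\top\}_{i\in J(\bar x)}$, I may restrict to a subset $J$ of active indices for which $\{M_i\}_{i\in J}$ is linearly independent, still with $\lambda_i\ge 0$. Taking the inner product of $x-\bar x=\sum_{i\in J}\lambda_i M_i^\top$ with $x-\bar x$ and using $M_i\bar x=q_i$ gives
\[
\|x-\bar x\|_2^2=\sum_{i\in J}\lambda_i\,\big(M_i x-q_i\big)\le\Big(\sum_{i\in J}\lambda_i\Big)\,t,
\]
since each term with $M_ix-q_i<0$ only lowers the sum while the positive ones are at most $t$. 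Linear independence of $\{M_i\}_{i\in J}$ makes the Gram matrix $M_JM_J^\top$ invertible, so from $M_J(x-\bar x)=M_JM_J^\top\lambda$ I solve $\lambda=(M_JM_J^\top)^{-1}M_J(x-\bar x)$ and bound $\sum_{i\in J}\lambda_i\le C_J\|x-\bar x\|_2$ with $C_J:=\sqrt{|J|}\,\big\|(M_JM_J^\top)^{-1}M_J\big\|_2<\infty$. Substituting and cancelling one factor of $\|x-\bar x\|_2$ yields $\|x-\bar x\|_2\le C_J\,t$.

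Finally I would set $H:=\max_J C_J$ over the finitely many index sets $J$ with linearly independent rows; this maximum is finite and strictly positive, and it depends only on $M$ (and, after the norm change, on $\|\cdot\|$), never on $x$. This uniformity---available precisely because only finitely many subsystems can arise as reduced active sets---is the whole point, and the step I expect to require the most care: one must check that the active set produced by the projection always reduces via Carath\'eodory to one of these independent subsystems, and that each $C_J$ is genuinely finite, which is exactly where invertibility of the Gram matrix enters. Converting $\|\cdot\|_2$ to the prescribed norm $\|\cdot\|$ through norm-equivalence constants then completes the argument.
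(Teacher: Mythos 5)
Your proof is correct, but there is nothing in the paper to compare it against: the paper states Lemma~\ref{lem:hoffman} as a known result and delegates its proof entirely to the citations \cite{AJHoffman_1952a} and \cite[pp.~175]{MHong_JZJuo_2017a}. What you have supplied is the classical self-contained argument via Euclidean projection: the normal-cone representation $x-\bar x=\sum_{i\in J(\bar x)}\lambda_i M_i^\top$ at the projection $\bar x$ (valid for polyhedra with no constraint qualification), conical Carath\'eodory reduction to a linearly independent active subsystem $J$, the identity $\|x-\bar x\|_2^2=\sum_{i\in J}\lambda_i(M_ix-q_i)$, the Gram-matrix bound $\sum_{i\in J}\lambda_i\le C_J\|x-\bar x\|_2$ with $C_J$ finite precisely because $M_JM_J^\top$ is invertible, and finiteness of the family of subsystems to make the constant uniform in $x$; the cancellation of $\|x-\bar x\|_2$ is legitimate since you assumed $x\notin\mathcal F$, and passing to an arbitrary norm only touches the left-hand side (the residual is in $\|\cdot\|_\infty$ regardless), so norm equivalence suffices. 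Two small polish points. First, to guarantee $H>0$ and to cover the degenerate case $M=\mathbf{0}$ (where nonemptiness forces $\mathcal F=\mathbb{R}^n$ and the bound is trivial, but your maximum ranges over an empty family), set $H:=\max\{1,\max_J C_J\}$. Second, it is worth stating explicitly that your $H$ depends only on $M$ and the norm, never on $q$: this $q$-independence is exactly the feature the paper exploits in Lemma~\ref{lem:linear-growth}, where the right-hand side $q_k=(b,-k)$ varies with $k$ while the Hoffman constant for $[A;\,c^\top]$ stays fixed, so your proof actually delivers the form of the lemma that the downstream argument needs.
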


\begin{lemma}[Linear growth of projection distances onto LP sublevel sets]
\label{lem:linear-growth}
Let $P=\{g:Ag\le b\}$ be nonempty and $c\in\mathbb{R}^n$.
Fix $g^0\in P$, and for each $k\in\mathbb{N}$ let
\[
P_{-k}:=\{g\in P:\ c^\top g\le -k\}
\]
be nonempty. Choose $g_k\in\arg\min\{\|g-g^0\|:\ g\in P_{-k}\}$.
Then there exists $H\in(0,\infty)$, depending only on $(A,b,c, g^0)$ and the chosen norm,
such that
\[
\|g_k-g^0\|\ \le\ H\,(1+k)\qquad\text{for all }k.
\]
\end{lemma}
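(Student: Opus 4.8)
The plan is to recognize each sublevel set $P_{-k}$ as a polyhedron whose \emph{constraint matrix is independent of $k$}, so that Hoffman's bound (Lemma~\ref{lem:hoffman}) supplies a single constant valid for all $k$. Concretely, I would write
\[
P_{-k} = \{g : M g \le q_k\}, \qquad M := \begin{bmatrix} A \\ c^\top \end{bmatrix}, \qquad q_k := \begin{bmatrix} b \\ -k \end{bmatrix}.
\]
Since $g_k$ minimizes $\|g - g^0\|$ over $P_{-k}$, we have $\|g_k - g^0\| = \mathrm{dist}(g^0, P_{-k})$. Applying Lemma~\ref{lem:hoffman} to $\mathcal{F} = P_{-k}$ at the point $x = g^0$ then yields
\[
\|g_k - g^0\| = \mathrm{dist}(g^0, P_{-k}) \le H \,\big\| (M g^0 - q_k)_+ \big\|_\infty ,
\]
where $H = H(M, \|\cdot\|)$. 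The decisive observation is that $H$ depends only on $M$ and the norm, and $M$ is the same for every $k$, so a single $H$ works uniformly across all $k$.

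Next I would evaluate the residual by splitting it into blocks:
\[
M g^0 - q_k = \begin{bmatrix} A g^0 - b \\ c^\top g^0 + k \end{bmatrix}.
\]
Because $g^0 \in P$, the first block satisfies $A g^0 - b \le 0$, so its positive part vanishes and only the last entry survives, giving $\big\|(M g^0 - q_k)_+\big\|_\infty = (c^\top g^0 + k)_+$. For $k \ge 0$ this is bounded by $|c^\top g^0| + k$, and hence by $(1 + |c^\top g^0|)(1 + k)$.

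Combining the two displays gives
\[
\|g_k - g^0\| \le H\,(1 + |c^\top g^0|)(1 + k),
\]
so the claim holds with $H' := H\,(1 + |c^\top g^0|)$, a finite constant depending only on $(A, c, g^0)$ and the chosen norm, which is a subset of the data permitted by the statement. I expect the only genuinely nontrivial point to be the uniformity of Hoffman's constant in the right-hand side: everything hinges on the fact that increasing $k$ perturbs only $q_k$ and never the matrix $M$, so Lemma~\ref{lem:hoffman} may be invoked with a constant independent of $k$. The remaining steps—the block split of the residual, the vanishing of the $A$-block because $g^0$ is feasible, and the elementary estimate $|c^\top g^0| + k \le (1 + |c^\top g^0|)(1 + k)$—are routine.
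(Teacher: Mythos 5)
Your proposal is correct and follows essentially the same route as the paper's own proof: the same augmented system $M=[A;\,c^\top]$, the same application of Hoffman's bound at $g^0$ with the $k$-independent constant, the same vanishing of the $A$-block, and the same final estimate absorbing $|c^\top g^0|$ into the constant. The uniformity of the Hoffman constant across $k$, which you correctly flag as the crux, is exactly what the paper exploits as well.
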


\begin{proof}
Write the augmented system for $P_{-k}$ as
\[
Mx\le q_k
\quad\Longleftrightarrow\quad
\begin{bmatrix} A \\ c^\top \end{bmatrix} x \;\le\;
\begin{bmatrix} b \\ -k \end{bmatrix}.
\]
Let $H$ be a Hoffman constant for the matrix $M=[A;\,c^\top]$ with respect to the chosen norm.
Applying Hoffman’s bound (Lemma~\ref{lem:hoffman}) at $x=g^0$ gives
\[
\mathrm{dist}(g^0,P_{-k})
\;\le\;
H\,\big\| (M g^0 - q_k)_+ \big\|_\infty
=
H\,\Big\| \Big( (A g^0 - b)_+,\ \big(c^\top g^0 - (-k)\big)_+ \Big) \Big\|_\infty .
\]
Since $g^0\in P$, we have $(A g^0 - b)_+=0$, so
\[
\mathrm{dist}(g^0,P_{-k})
\;\le\;
H\,\big(c^\top g^0 + k\big)_+ 
=
\begin{cases}
0, & c^\top g^0 \le -k,\\[1mm]
H\,(c^\top g^0 + k), & c^\top g^0 > -k.
\end{cases}
\]
By definition of $g_k$ as a nearest point to $P_{-k}$,
\[
\|g_k - g^0\| \;=\; \mathrm{dist}(g^0,P_{-k})
\;\le\; H\,(c^\top g^0 + k)_+
\;\le\; H\,(1 + |c^\top g^0|)\,(1+k).
\]
Renaming the constant $H \leftarrow H\,(1+|c^\top g^0|)$ yields the claimed linear bound
$\|g_k-g^0\|\le H\,(1+k)$ for all $k$.
\end{proof}

\begin{lemma}[Boundedness of a linear functional on a polyhedron]
\label{lem:boundedness1}
Let $P := \{\, g \in \mathbb{R}^n : A g \le b \,\}$ be a nonempty polyhedron and
$\mathrm{rec}(P) := \{\, d \in \mathbb{R}^n : A d \le 0 \,\}$ its recession cone.
If every $d \in \mathrm{rec}(P)$ satisfies $c^{\top} d \ge 0$, then the linear functional
$J(g)=c^{\top}g$ is bounded below on~$P$.
\end{lemma}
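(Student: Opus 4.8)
\section{Proof plan for Lemma~\ref{lem:boundedness1}}

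The plan is to argue by contraposition: assuming $J$ is \emph{not} bounded below on $P$, I will produce a recession direction $d \in \mathrm{rec}(P)$ with $c^\top d < 0$, contradicting the hypothesis. The degenerate case $c = 0$ is trivial, since then $J \equiv 0$; so assume $c \neq 0$. If $\inf_{g \in P} c^\top g = -\infty$, then the sublevel sets $P_{-k} := \{g \in P : c^\top g \le -k\}$ are nonempty for every $k \in \mathbb{N}$, which is exactly the standing hypothesis of Lemma~\ref{lem:linear-growth}. Fix any $g^0 \in P$ and let $g_k \in \arg\min\{\|g - g^0\| : g \in P_{-k}\}$ be a nearest point of $P_{-k}$ to $g^0$.

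The heart of the argument is a two-sided control on $\|g_k - g^0\|$. On one hand, $c^\top(g_k - g^0) \le -k - c^\top g^0 \to -\infty$, so by Cauchy--Schwarz $\|g_k - g^0\| \ge |c^\top(g_k - g^0)| / \|c\| \to \infty$; in particular the normalized directions $d_k := (g_k - g^0)/\|g_k - g^0\|$ are well-defined unit vectors for all large $k$. On the other hand, Lemma~\ref{lem:linear-growth} supplies the crucial \emph{upper} bound $\|g_k - g^0\| \le H(1 + k)$. By compactness of the unit sphere in $\mathbb{R}^n$, I pass to a subsequence along which $d_k \to d$ with $\|d\| = 1$.

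It then remains to verify the two properties of the limit $d$. For membership in the recession cone, rewrite $A g_k \le b$ as $A g^0 + \|g_k - g^0\|\, A d_k \le b$, so that $A d_k \le (b - A g^0)/\|g_k - g^0\|$; since $\|g_k - g^0\| \to \infty$, the right-hand side tends to $0$ and hence $A d \le 0$, i.e.\ $d \in \mathrm{rec}(P)$. For strict negativity, divide the inequality $c^\top(g_k - g^0) \le -k - c^\top g^0$ by the positive quantity $\|g_k - g^0\|$ and insert the upper bound: because the numerator is eventually negative, enlarging the denominator to $H(1+k)$ only raises the quotient, giving $c^\top d_k \le (-k - c^\top g^0)/(H(1+k))$, whose limit is $-1/H$. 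Thus $c^\top d \le -1/H < 0$, contradicting $c^\top d \ge 0$ and completing the contrapositive.

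I expect the main obstacle --- and the reason the two preceding lemmas are invoked --- to be obtaining \emph{strict} negativity $c^\top d < 0$, rather than the weak inequality $c^\top d \le 0$ that a bare compactness/normalization argument would yield. The danger is that the limiting direction $d$ could be ``flat,'' with $c^\top d = 0$, which would not contradict anything. This is ruled out by matching the linear rates: $c^\top(g_k - g^0)$ decreases like $-k$, while Lemma~\ref{lem:linear-growth} (built on Hoffman's bound, Lemma~\ref{lem:hoffman}) guarantees that $\|g_k - g^0\|$ grows \emph{no faster} than $H(1+k)$. The quotient of these two linear quantities is therefore bounded away from zero, pinning the slope at $c^\top d \le -1/H$ and delivering the contradiction. (A shorter Minkowski--Weyl proof is also available, mirroring the ray computation in Theorem~\ref{thm:exist}, but the sublevel-set route reuses the Hoffman machinery already developed here.)
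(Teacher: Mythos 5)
Your proof is correct and follows essentially the same route as the paper's: contradiction via nonempty sublevel sets $P_{-k}$, nearest-point projections $g_k$, the linear-growth bound of Lemma~\ref{lem:linear-growth}, and a compactness argument producing a recession direction $d$ with $c^\top d<0$. The only difference is cosmetic --- you normalize $g_k-g^0$ by its norm and use the Hoffman-based bound to force $c^\top d\le -1/H<0$, whereas the paper normalizes by the objective decrease $t_k$ so that $c^\top \hat d=-1$ automatically and uses the same bound instead to keep the sequence bounded; the two uses of Lemma~\ref{lem:linear-growth} are interchangeable.
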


\begin{proof}
Assume for contradiction that $J$ is unbounded below on $P$.
Then for each integer $k\ge1$ the sublevel set $P_{-k}:=\{g\in P:\ c^\top g\le -k\}$ is nonempty.
Fix $g^0\in P$ and, for each $k$, choose $g_k\in P_{-k}$ minimizing $\|g-g^0\|$.
Define the positive scalars
\[
t_k := -\big(c^\top g_k - c^\top g^0\big)\ \xrightarrow[k\to\infty]{}\ +\infty,
\qquad
\hat d_k := \frac{g_k-g^0}{t_k}.
\]
By Lemma~\ref{lem:linear-growth}, $\|g_k-g^0\|\le H(1+k)$, hence
\[
\|\hat d_k\| = \frac{\|g_k-g^0\|}{t_k}
\;\le\; \frac{H(1+k)}{k - c^\top g^0}
\;\le\; 2H
\quad\text{for all large }k,
\]
so $(\hat d_k)$ is bounded. By Bolzano--Weierstrass, extract a convergent subsequence
$\hat d_{k_j}\to \hat d$.

Since $A g_k\le b$ and $A g^0\le b$,
\[
A\hat d_k = \frac{A g_k - A g^0}{t_k}
\;\le\; \frac{b - A g^0}{t_k} \;\xrightarrow[k\to\infty]{}\; 0,
\]
which implies $A\hat d\le 0$; 
hence $\hat d\in\mathrm{rec}(P)$.
Moreover,
\[
c^\top \hat d_k
= \frac{c^\top g_k - c^\top g^0}{t_k}
= -1 \quad\Rightarrow\quad c^\top \hat d = -1 < 0.
\]
Thus there exists a nonzero recession direction $\hat d$ with $c^\top \hat d<0$,
contradicting the hypothesis. Therefore $J$ is bounded below on $P$.
\end{proof}

\begin{proposition}[When boundedness (or bounded-below) is automatic]
\label{prop:boundedness}
Let $P = \{\vecg \in \mathbb{R}^n : A \vecg \le b, \; \vecg_{\min} \le \vecg \le \vecg_{\max}\}$,
and suppose the flux map is affine: $\vecq(\vecg) = \qzero + \Qmap \vecg$.
Then:
\begin{enumerate}[(i)]
\item \textbf{Explicit box bounds.} If all components have finite bounds
      $\vecg_{\min} \le \vecg \le \vecg_{\max}$, then $P$ is a compact polytope (closed and bounded).
\item \textbf{Flux caps with full column rank.} If two-sided flux caps
      $|\vecq(\vecg)| \le \vecq_{\max}$ hold and $\Qmap$ has full column rank on the control subspace,
      then $\|\Qmap \vecg\|_\infty \le \|\vecq_{\max}-\qzero\|_\infty$ implies
      $\|\vecg\|_2 \le C$ for some $C<\infty$; hence $P$ is bounded.
\item \textbf{Rank-deficient case with sign constraints.} If $\Qmap$ is rank-deficient, but edge sign constraints
      eliminate every recession direction $\vecd\in\mathrm{rec}(P)$ with $\vecc^\top \vecd<0$ (no “descent” rays),
      then the linear objective is bounded below on $P$ (by Lemma~\ref{lem:boundedness1}).
      \emph{(Note: $P$ itself can remain unbounded due to neutral rays with $\vecc^\top \vecd=0$.)}
\end{enumerate}
\end{proposition}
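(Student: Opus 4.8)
The plan is to handle the three cases separately, since each rests on a different mechanism: compactness for (i), injectivity of the flux map for (ii), and the recession-cone criterion of \Cref{lem:boundedness1} for (iii). For (i) I would observe that the finite box $[\vecg_{\min},\vecg_{\max}]$ is compact, and that $P$ is its intersection with the finitely many closed halfspaces $\{\vecg:A\vecg\le b\}$. An intersection of closed sets is closed and a subset of a bounded set is bounded, so $P$ is closed and bounded in $\bbR^n$, hence compact by Heine--Borel; being a bounded polyhedron it is a polytope. This case is immediate.

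For (ii) the key point is that full column rank of $\Qmap$ makes $\vecg\mapsto\Qmap\vecg$ injective, so its smallest singular value $\sigma_{\min}(\Qmap)$ is strictly positive and $\|\Qmap\vecg\|_2\ge\sigma_{\min}(\Qmap)\,\|\vecg\|_2$ for every $\vecg$. The two-sided cap $|\qzero+\Qmap\vecg|\le\vecq_{\max}$ bounds $\|\qzero+\Qmap\vecg\|_\infty\le\|\vecq_{\max}\|_\infty$, so by the triangle inequality $\|\Qmap\vecg\|_\infty\le\|\vecq_{\max}\|_\infty+\|\qzero\|_\infty=:R<\infty$. Combining this with the norm equivalence $\|\cdot\|_2\le\sqrt{n_E}\,\|\cdot\|_\infty$ on $\bbR^{n_E}$ gives
\[
\|\vecg\|_2\ \le\ \frac{1}{\sigma_{\min}(\Qmap)}\,\|\Qmap\vecg\|_2\ \le\ \frac{\sqrt{n_E}}{\sigma_{\min}(\Qmap)}\,R\ =:\ C\ <\ \infty ,
\]
so every feasible $\vecg$ lies in a ball of radius $C$ and $P$ is bounded. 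I would emphasize that the box bounds play no role here: the flux caps alone force boundedness once $\Qmap$ is injective.

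For (iii) the statement is, by design, a direct application of \Cref{lem:boundedness1}. First I would absorb all inequality constraints---the caps $A_{\mathrm{cap}}\vecg\le b_{\mathrm{cap}}$, the edge rules $A_{\mathrm{edge}}\vecg\le b_{\mathrm{edge}}$, and any finite box bounds rewritten as $\vecg\le\vecg_{\max}$ and $-\vecg\le-\vecg_{\min}$---into a single system $\tilde A\vecg\le\tilde b$, so that $P=\{\vecg:\tilde A\vecg\le\tilde b\}$ and $\mathrm{rec}(P)=\{\vecd:\tilde A\vecd\le 0\}$ exactly match the setting of \Cref{lem:boundedness1}. The hypothesis that the sign constraints eliminate every recession direction with $\vecc^\top\vecd<0$ means precisely that no $\vecd\in\mathrm{rec}(P)$ has $\vecc^\top\vecd<0$, i.e.\ every recession direction obeys $\vecc^\top\vecd\ge0$; invoking \Cref{lem:boundedness1} then yields that $J(\vecg)=\vecc^\top\vecg$ is bounded below on $P$. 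I would close by noting, as the statement already flags, that this does not make $P$ bounded: neutral rays with $\vecc^\top\vecd=0$ may survive in $\mathrm{rec}(P)$, so $P$ can be unbounded while the infimum stays finite (and, by \Cref{thm:exist}, attained).

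The main obstacle is essentially nonexistent, since the substantive work was front-loaded into \Cref{lem:boundedness1} (and, beneath it, the Hoffman bound and the linear-growth estimate). The only places requiring genuine care are the quantitative coercivity estimate in (ii)---supplying $\|\Qmap\vecg\|_2\ge\sigma_{\min}(\Qmap)\,\|\vecg\|_2$ from full column rank and tracking the norm-equivalence constants---and, in (iii), correctly folding the box bounds into the single matrix form and reading the hypothesis as the condition $\vecc^\top\vecd\ge0$ on the \emph{full} recession cone rather than on any partial constraint block.
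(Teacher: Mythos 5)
Your proposal is correct and follows essentially the same route as the paper's proof: compactness of the box intersection for (i), the coercivity bound $\|\vecg\|_2\le\|\Qmap\vecg\|_2/\sigma_{\min}(\Qmap)$ combined with the $\sqrt{m}$ norm-equivalence for (ii), and a direct invocation of Lemma~\ref{lem:boundedness1} for (iii). Your handling of the constant in (ii) via the triangle inequality ($\|\Qmap\vecg\|_\infty\le\|\vecq_{\max}\|_\infty+\|\qzero\|_\infty$) is in fact slightly more careful than the paper's asymmetric bound $\|\vecq_{\max}-\qzero\|_\infty$, but this is cosmetic and does not change the argument.
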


\begin{proof}
(i) The intersection with a finite hyperrectangle is closed and bounded, hence compact.

(ii) From $-\vecq_{\max} \le \qzero + \Qmap \vecg \le \vecq_{\max}$ we obtain
$-\vecq_{\max}-\qzero \le \Qmap \vecg \le \vecq_{\max}-\qzero$, i.e.
$\|\Qmap \vecg\|_\infty \le \|\vecq_{\max}-\qzero\|_\infty$.
Let $\Qmap\in\mathbb{R}^{m\times n}$ and assume it has full \emph{column} rank, so
$\sigma_{\min}(\Qmap)>0$. Then
\[
\|\vecg\|_2 \le \frac{\|\Qmap \vecg\|_2}{\sigma_{\min}(\Qmap)}
             \le \frac{\sqrt{m}\,\|\Qmap \vecg\|_\infty}{\sigma_{\min}(\Qmap)}
             \le \frac{\sqrt{m}\,\|\vecq_{\max}-\qzero\|_\infty}{\sigma_{\min}(\Qmap)}.
\]
Thus $\vecg$ is norm-bounded and $P$ is bounded.

(iii) Without flux caps, $\mathrm{rec}(P)$ may be nontrivial. If the sign constraints (together with
$A\vecg\le b$ and the bound structure) exclude every $d\in\mathrm{rec}(P)$ with $\vecc^\top d<0$,
then by Lemma~\ref{lem:boundedness1} the objective is bounded below on $P$.
Neutral rays with $\vecc^\top d=0$ may remain, so $P$ need not be bounded.
\end{proof}

\section{Numerical experiments}
\label{sec:numerical}

The main driver is Algorithm~\ref{alg:driver} with subroutines 
given in Algorithms~\ref{alg:build}--\ref{alg:recover-verify}. 
In all examples, we set $\phi_{\max} = 1$ and $\varepsilon = 0$.

We present two representative networks of contrasting structure and connectivity: a \emph{typical large stadium in a major 
modern city}, which forms a single connected component
with relatively uniform corridor widths, and a 
\emph{complex street network emanating from a large, historical city center},
which decomposes into seven connected subgraphs with pronounced geometric heterogeneity.
The same linear programming (LP) formulation applies to both, demonstrating that
the proposed framework is robust to both dense single-component and fragmented multi-component geometries.

\paragraph{Gauge fixing for control invariance.}
When all Dirichlet values on a connected component are treated as free controls,
adding a constant to every boundary value on that component leaves edge fluxes and
nodal fluxes unchanged. The LP objective and constraints are therefore invariant
to such rigid shifts, creating a null direction in control space.
To remove this gauge freedom and improve conditioning,
we impose one scalar gauge per connected component---either by fixing one Dirichlet node
or enforcing a mean-zero constraint on its boundary controls.
In the following examples, one node per component is fixed to a prescribed potential value.

\paragraph{Flux metrics.}
We visualize two complementary quantities:
\begin{itemize}
  \item \emph{Flux intensity:} the magnitude $|q_e|$, representing per-area flux density (diffusive strength per unit width);
  \item \emph{Throughput:} the product $|q_e|A_e$, representing total transported quantity through an edge (aggregate capacity use).
\end{itemize}
Together, these measures distinguish between relative diffusion intensity and total corridor throughput.

The limit set for the flux corresponds to the empirically
observed limit of pedestrian flux intensity, which is of the
order of 1 person/meter/second for safe crowd walking 
conditions \cite{fruin1971pedestrian,predtetschenski1971personenstrome}. Thus, the results obtained
represent the upper limit of steady pedestrian flux that these
networks are able to sustain in a safe manner.

\subsection{Typical Large Stadium in a Modern City}
\label{sec:bernabeu}

We first demonstrate the LP framework on a realistic geometric network extracted via google maps for a large stadium located
in a modern city. The network only considers a partial set
of streets, no subway exits, and pedestrian motion only
(no cars, buses or trains). 
The graph $\calG=(\calV,\calE)$ encodes the concourse and corridor layout of the stadium and its immediate 
surroundings.
Vertices correspond to corridor intersections, and 
edges to walkable links (avenues, streets, passages)
with assigned cross-sectional areas.
Conductances are scaled inversely with edge length to emulate diffusive pedestrian flow
or steady-state heat conduction along corridors of varying width. 

\subsubsection{Geometry and cross-sectional area distribution}

Figure~\ref{fig:bernabeu-area} shows the reconstructed network geometry
colored by effective edge area.
Blue edges correspond to narrow passages, while yellow–red edges indicate wider corridors
or open concourse sections.
The lower panel zooms into the interior, revealing the main radial and circumferential corridors
that govern overall circulation.

\begin{figure}[h!]
\centering
\includegraphics[width=0.8\textwidth]{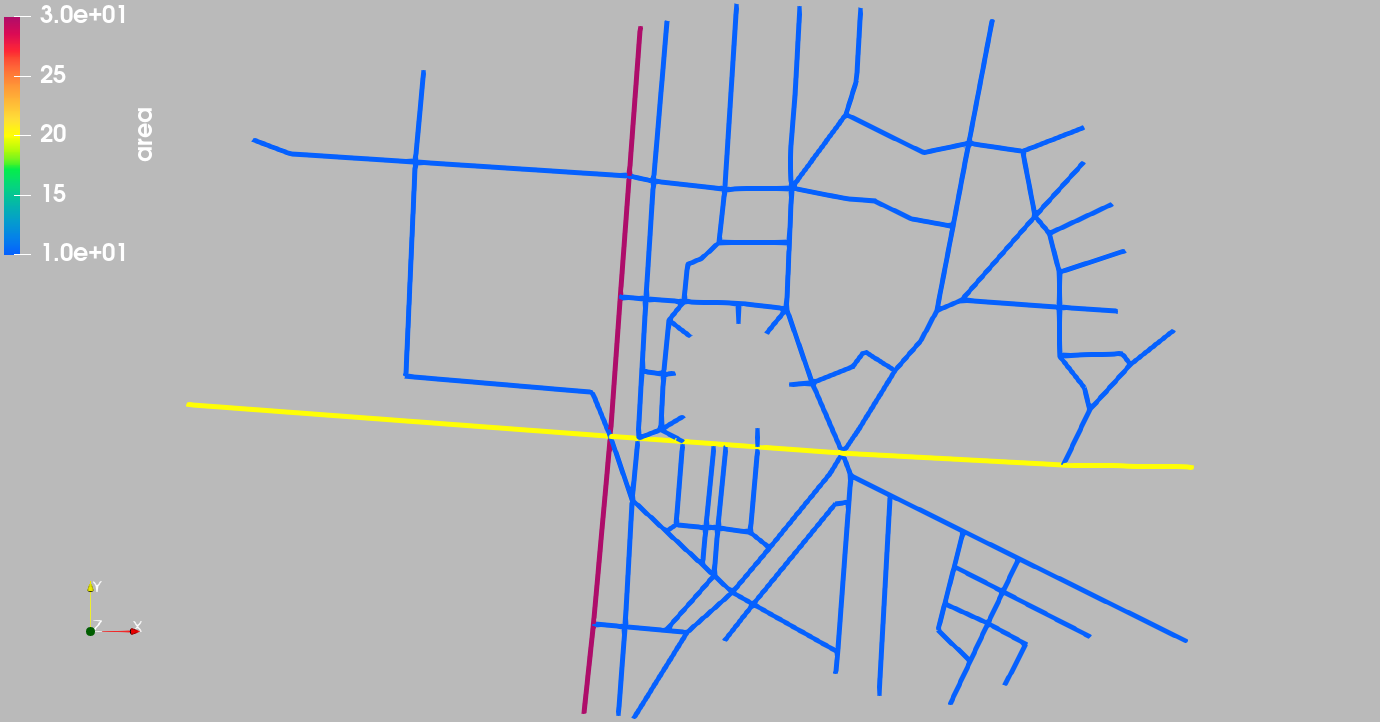}
\vspace{1em}
\includegraphics[width=0.8\textwidth]{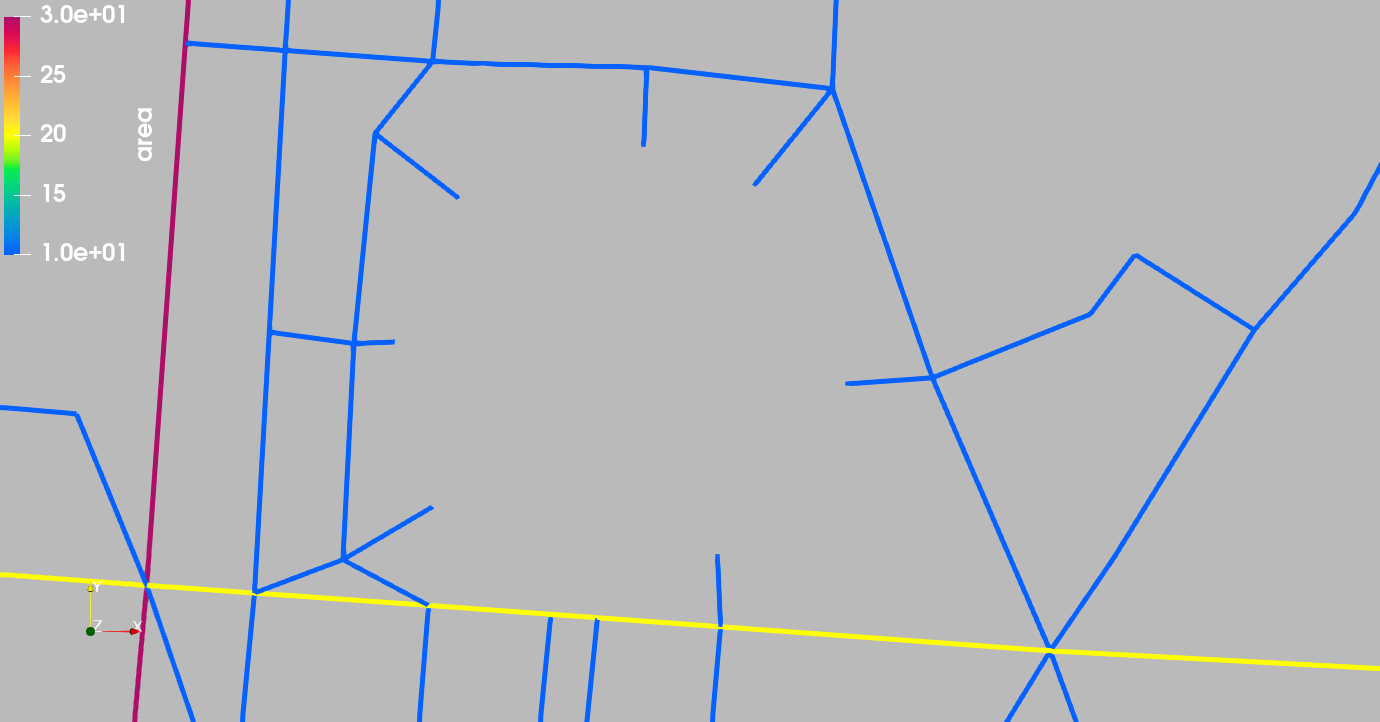}
\caption{Stadium network colored by corridor cross-sectional area.
Blue: narrow links; red: broad concourse regions.
The zoomed view highlights the dense interior structure near the stadium core.}
\label{fig:bernabeu-area}
\end{figure}

\subsubsection{Temperature (potential) distribution}

The steady-state potential field obtained from the optimized control
is shown in Figure~\ref{fig:bernabeu-temp}.
Higher potentials (red–yellow) occur near entrance boundaries,
while cooler regions (green–blue) correspond to exits or high-dissipation paths.
We fix the potential at one interior node ($113$) to a reference value $u_{113}=10$
to remove the additive nullspace of the steady-state diffusion operator.
This normalization pins the global potential field without affecting gradients or flux balance.

\begin{figure}[h!]
\centering
\includegraphics[width=0.8\textwidth]{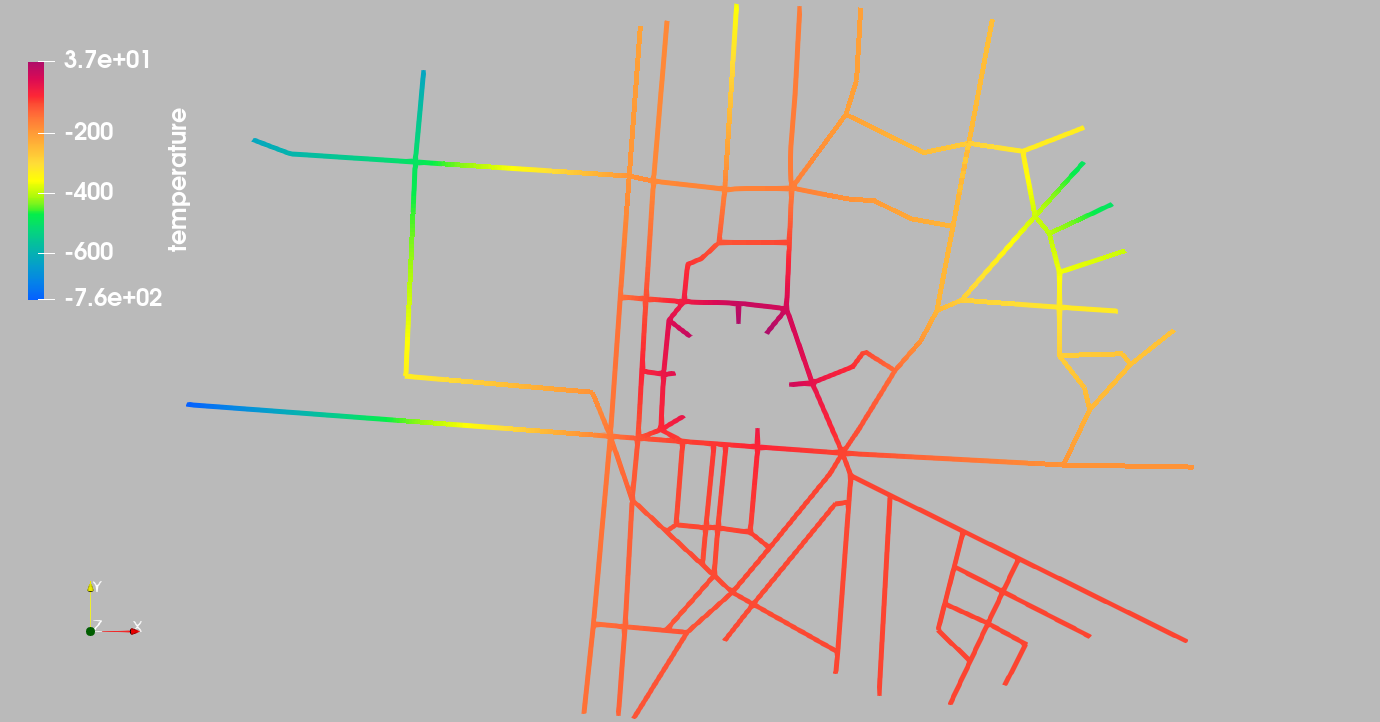}
\caption{Optimized potential field $u$ for the stadium network.
Hotter colors (red–yellow) mark entrances or high-pressure zones;
cooler tones (blue–green) mark exits and dissipative corridors.
The reference potential was fixed at node~113 ($u_{113}=10$).}
\label{fig:bernabeu-temp}
\end{figure}

\subsubsection{Flux intensity and throughput}

Figures~\ref{fig:bernabeu-flux}–\ref{fig:bernabeu-flxxarea}
show the computed flux intensity $|q_e|$ and throughput $|q_e|A_e$
across the network.
High-intensity fluxes (yellow–red) appear along the primary north–south
and east–west corridors, indicating strong directional transport
between major entry and exit points.
Flux directions remain consistent with the stadium’s
natural pedestrian flow topology.
After multiplying by area, the throughput map emphasizes
broad corridors that carry higher total flow.

\begin{figure}[h!]
\centering
\includegraphics[width=0.8\textwidth]{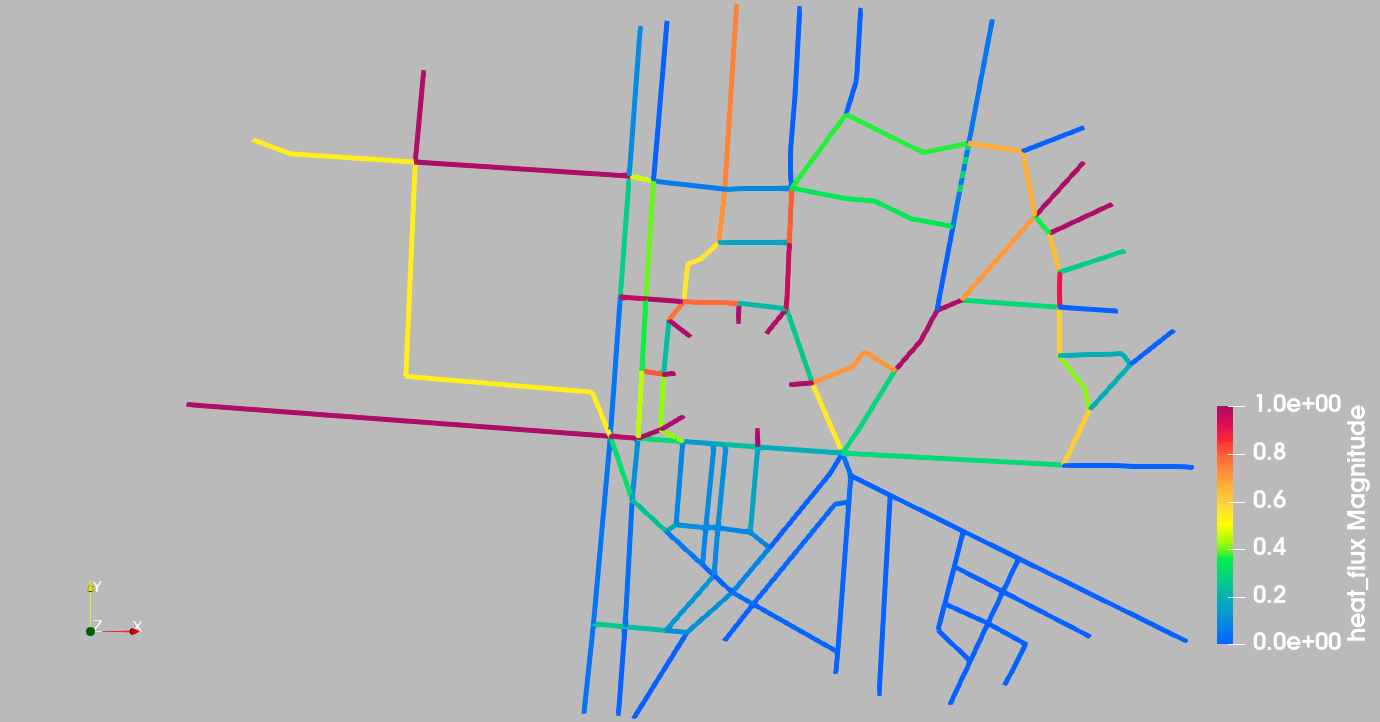}
\vspace{1em}
\includegraphics[width=0.8\textwidth]{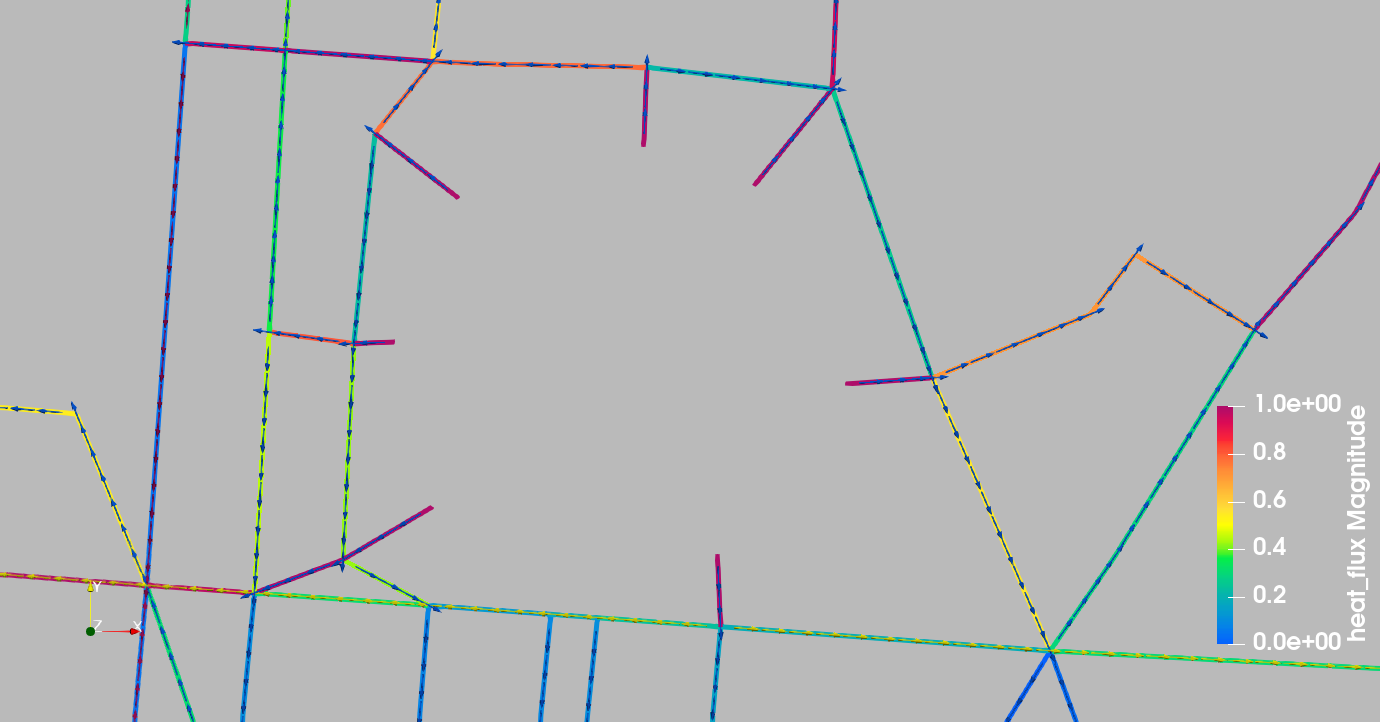}
\caption{Flux intensity $|q_e|$ across the network.
Top: global view of all corridors. Bottom: zoomed view near the stadium interior.
Color represents per-area flow strength; arrows indicate flux direction.}
\label{fig:bernabeu-flux}
\end{figure}

\begin{figure}[h!]
\centering
\includegraphics[width=0.8\textwidth]{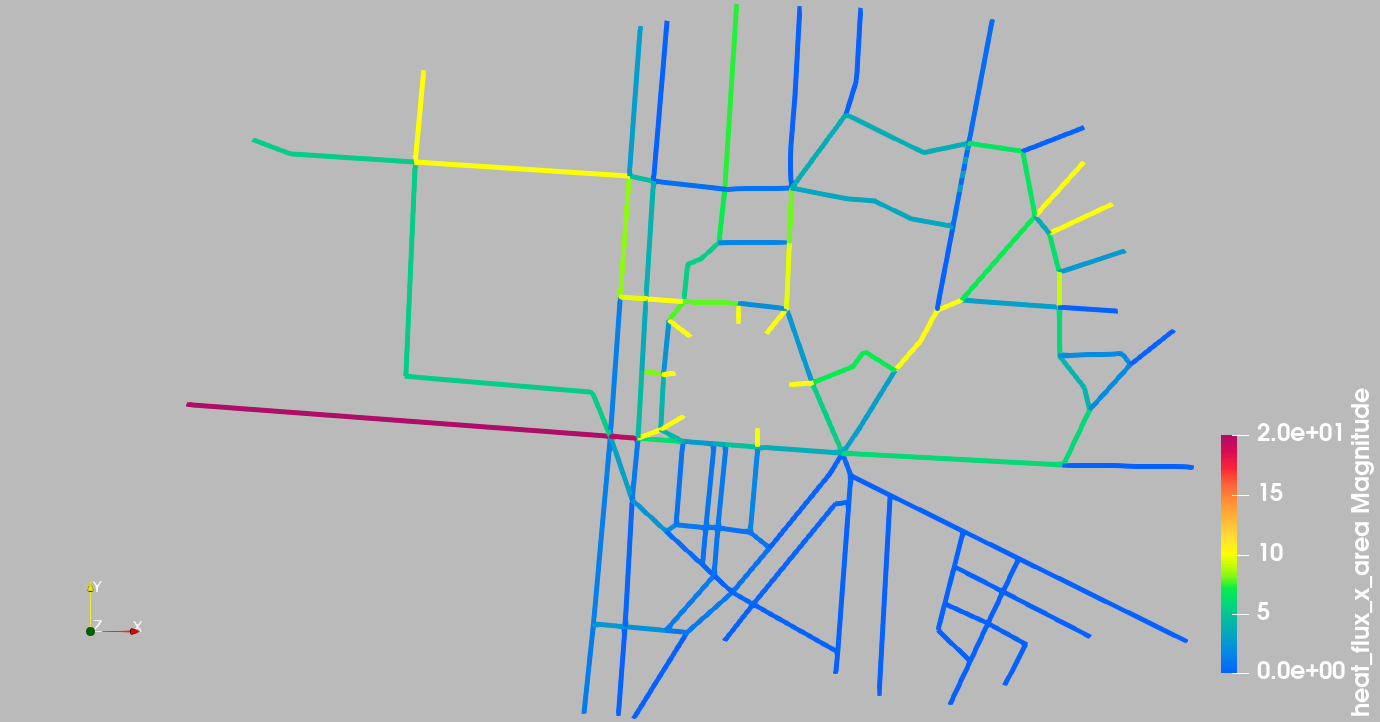}
\vspace{1em}
\includegraphics[width=0.8\textwidth]{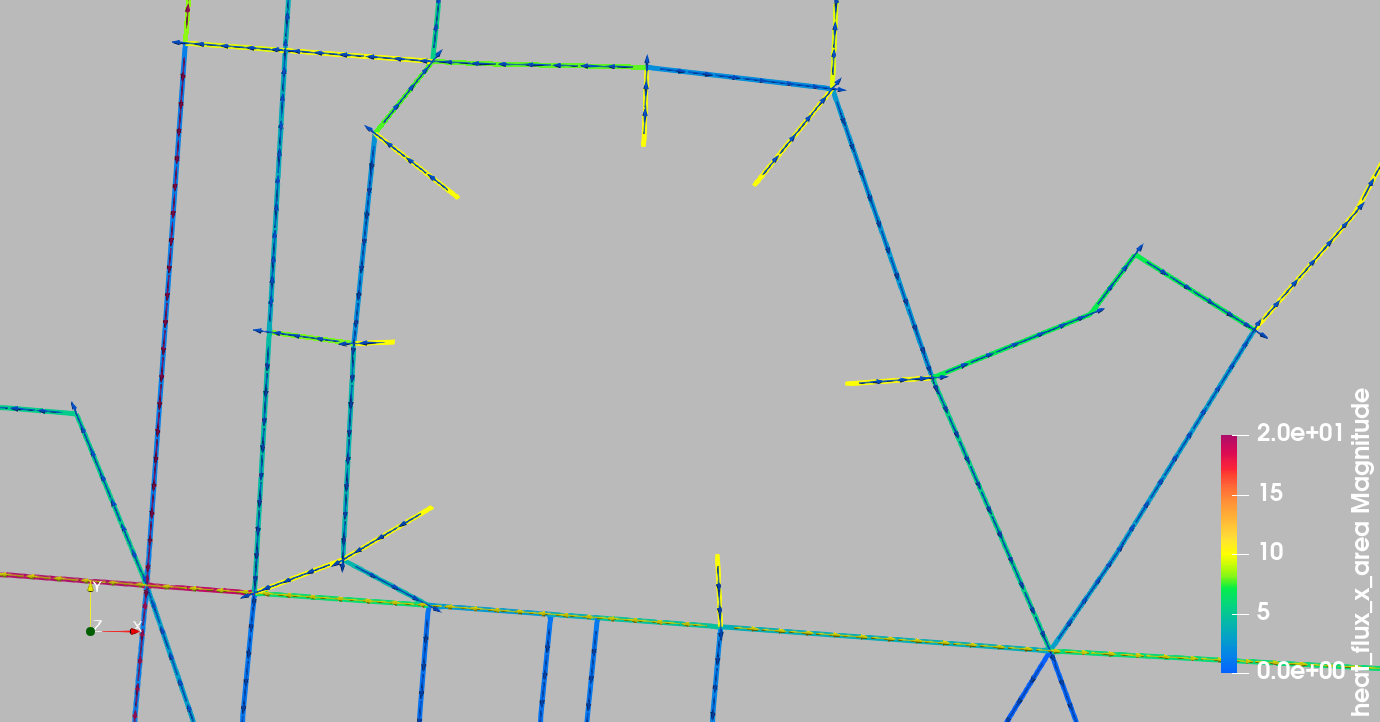}
\caption{Flux throughput ($|q_e|A_e$) across the network.
Top: global view. Bottom: zoom near the central concourse.
Broader corridors support proportionally higher total flux.}
\label{fig:bernabeu-flxxarea}
\end{figure}

\subsubsection{Validation and observations}

Figure~\ref{fig:bernabeu-overview} summarizes the overall network geometry.
The LP formulation yields stable, physically consistent 
solutions even on complex real-world networks.  
No violation of the sign constraints was observed, and total inflow/outflow
fluxes balanced to within $10^{-12}$.  
The major flux paths align with the architectural concourse design,
validating the model’s geometric sensitivity and predictive realism.

\begin{figure}[h!]
\centering
\includegraphics[width=0.8\textwidth]{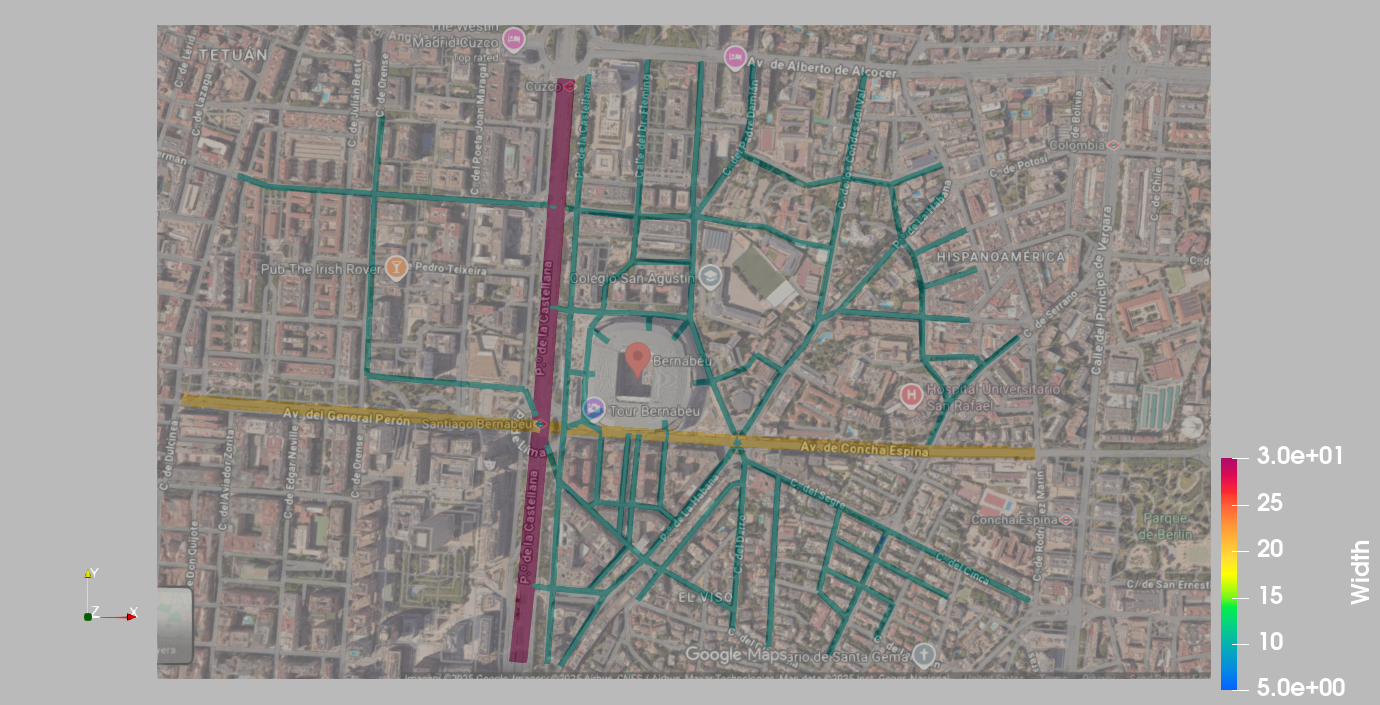}
\caption{Overall stadium network geometry used in the simulation,
overlaid on the planar embedding extracted}.
\label{fig:bernabeu-overview}
\end{figure}

\begin{table}[h!]
\centering
\setlength{\tabcolsep}{8pt}
\renewcommand{\arraystretch}{1.15}
\begin{tabular}{l r}
\toprule
\textbf{Quantity} & \textbf{Value} \\
\midrule
$\max(\Phi_{\text{in}})$ (should $\le 0$)                    & $-1.000\times 10^{+1}$ \\
$\min(\Phi_{\text{out}})$ (should $\ge 0$)                   & $-8.903\times 10^{-15}$ \\
Max boundary–edge sign violation (should $\le 0$)            & $8.90\times 10^{-15}$ \\
Global conservation $\sum_v \Phi_v$ (should $\approx 0$)     & $4.44\times 10^{-14}$ \\
$\max_{v\in\mathcal V_{\text{int}}} |\Phi_v|$                 & $3.936\times 10^{-13}$ \\
Amount entering at $\mathcal V_{\text{in}}$                  & $7.000000\times 10^{+1}$ \\
Amount leaving at $\mathcal V_{\text{out}}$                  & $7.000000\times 10^{+1}$ \\
In–out mismatch (should $\approx 0$)                         & $-1.236\times 10^{-12}$ \\
$\max_{\text{component}} \big|\sum_{v\in \text{comp}} \Phi_v\big|$ & $9.599\times 10^{-15}$ \\
\bottomrule
\end{tabular}
\caption{Stadium exit validation metrics.  
All diagnostics confirm sign correctness and flux conservation 
to within double-precision tolerance ($\sim10^{-14}$).}
\label{tab:bernabeu-metrics}
\end{table}

\paragraph{Key remarks.}
The feasible set $P$ is convex and bounded, ensuring LP solvability. The diffusion-like flux pattern shows that the optimized control redistributes flow to prevent local congestion. The area scaling reveals near-constant per-area flux intensity, confirming that the optimization respects both geometric and capacity constraints.

\subsection{Complex Street Network Emanating from Large, Historical City Center}
\label{sec:kaaba}

We next apply the same LP framework to a geometric network
typical of those encountered around large, historical city centers.
The graph $\calG=(\calV,\calE)$ represents the pedestrian and structural corridors
surrounding the historical center (whose network in turn may
be very complex and chaotic).
Unlike the previous stadium network, this geometry comprises
seven disconnected components of varying scale.
Each component was analyzed independently under identical flux-cap and sign constraints.

\subsubsection{Boundary and reference conditions}

Dirichlet boundary conditions were imposed on inflow (entrance) and outflow (exit) nodes
located at the periphery of each component.
To remove the additive nullspace within each disconnected subgraph,
one node per component was fixed to a prescribed potential:
\[
\text{Nodes } \{10,\,88,\,437,\,152,\,441,\,542,\,550\}, \qquad
u_i = 10.
\]
This ensures uniqueness of the steady-state potential on each connected component
and eliminates the corresponding control-space gauge.

\subsubsection{Temperature (potential) distribution}

The resulting potential field is shown in Figure~\ref{fig:kaaba-temp}.
High potentials (red–yellow) appear near inflow zones, while cooler regions
(blue–green) correspond to outflow boundaries.
Smooth gradients are observed across all seven components,
reflecting physically consistent equilibrium states.

\begin{figure}[h!]
\centering
\includegraphics[width=0.8\textwidth]{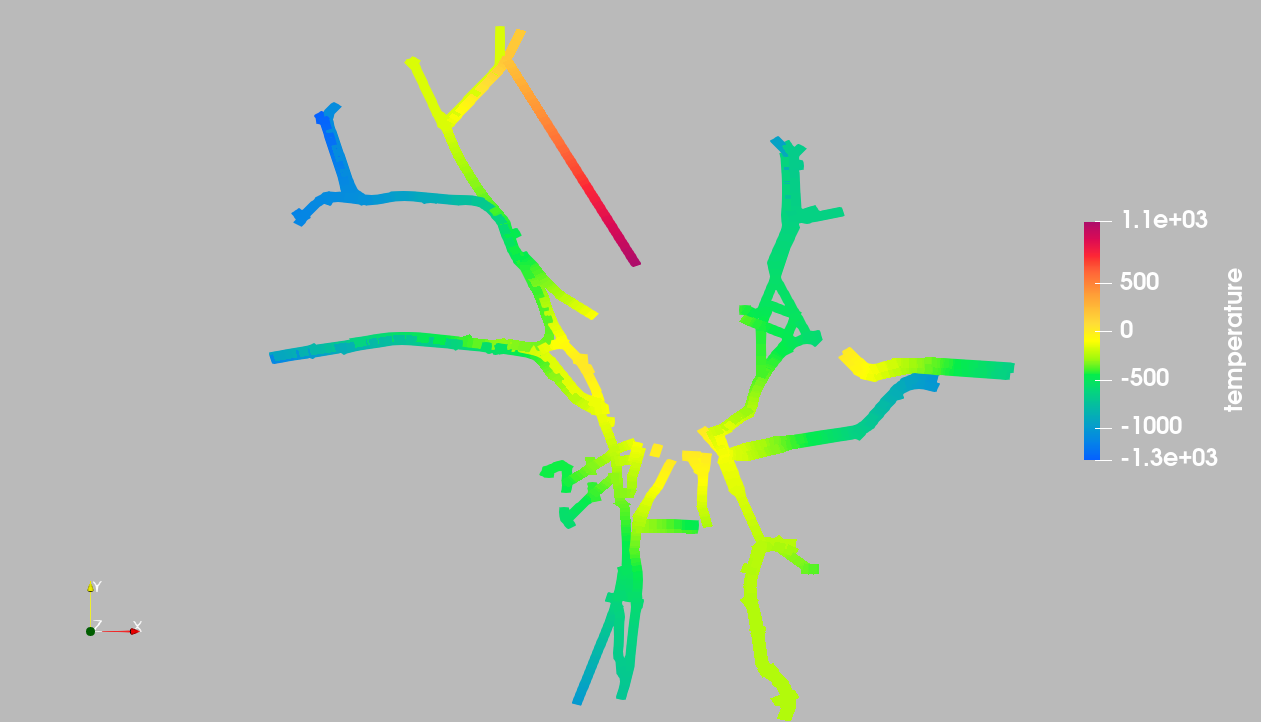}
\caption{Optimized potential field $u$ over the network.
Each connected component exhibits smooth gradients between inflow and outflow boundaries.
Fixed nodes ($u_i=10$) provide a reference potential per component.}
\label{fig:kaaba-temp}
\end{figure}

\subsubsection{Flux intensity, direction, and throughput}

Figure~\ref{fig:kaaba-flux} shows the computed flux intensity $|q_e|$
over the full network, followed by directional and zoomed views
(Figures~\ref{fig:kaaba-flux1}–\ref{fig:kaaba-flux3}).
The LP enforcement of sign constraints yields consistent flux directions,
while stronger magnitudes concentrate along primary corridors.
Figure~\ref{fig:kaaba-hflux-area} presents the total throughput $|q_e|A_e$,
highlighting wide corridors with greater transport capacity.

\begin{figure}[h!]
\centering
\includegraphics[width=0.8\textwidth]{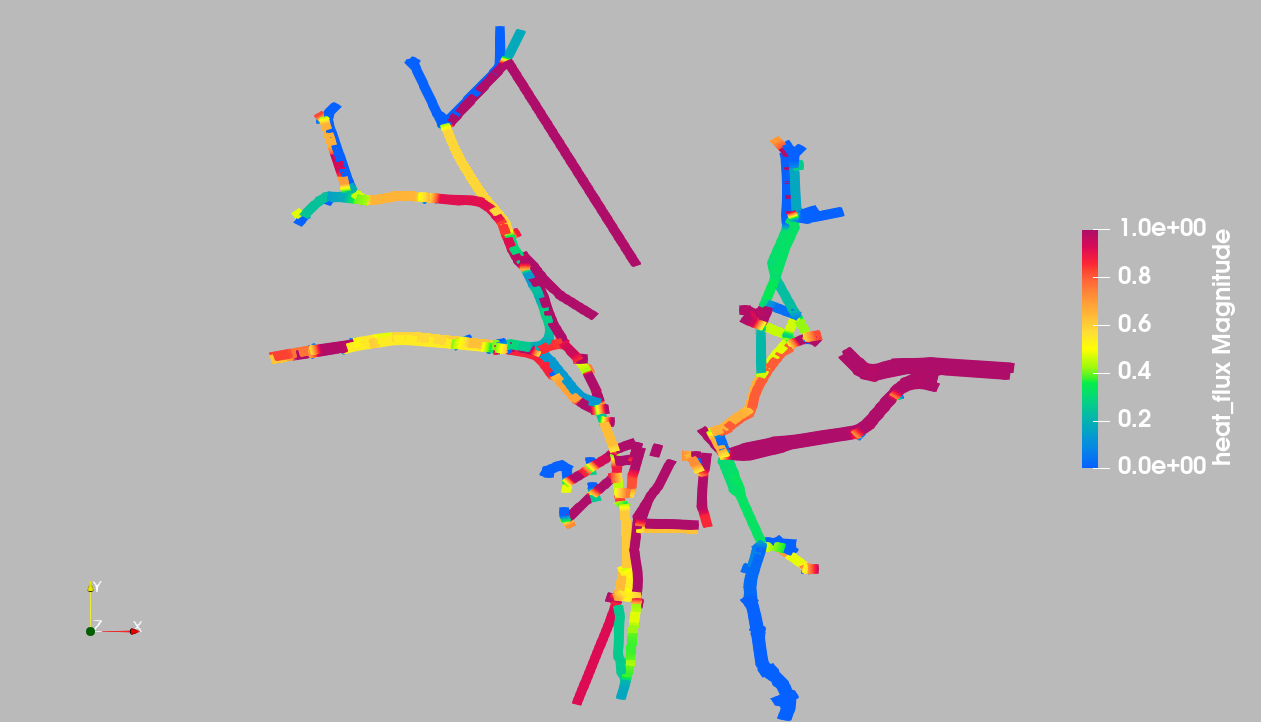}
\caption{Flux intensity $|q_e|$ across the network.
Color scale indicates per-area flux density (blue: weak, red: strong).}
\label{fig:kaaba-flux}
\end{figure}

\begin{figure}[h!]
\centering
\includegraphics[width=0.8\textwidth]{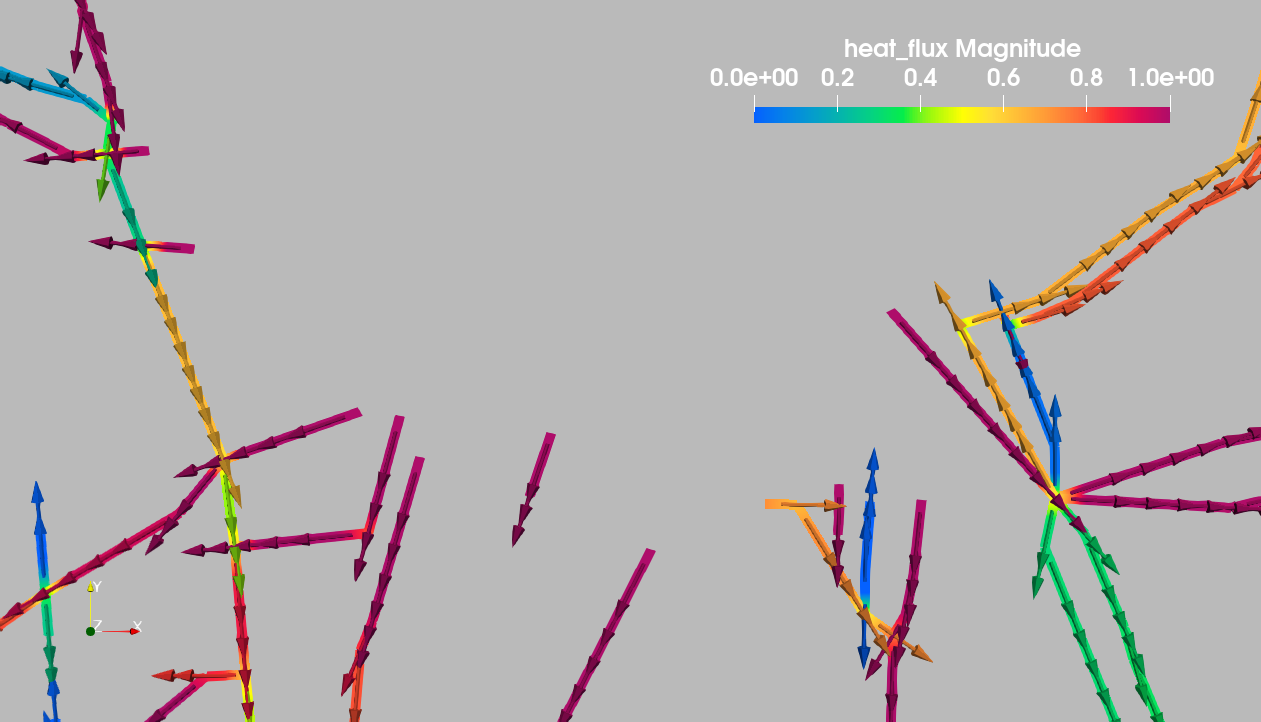}
\vspace{1em}
\includegraphics[width=0.8\textwidth]{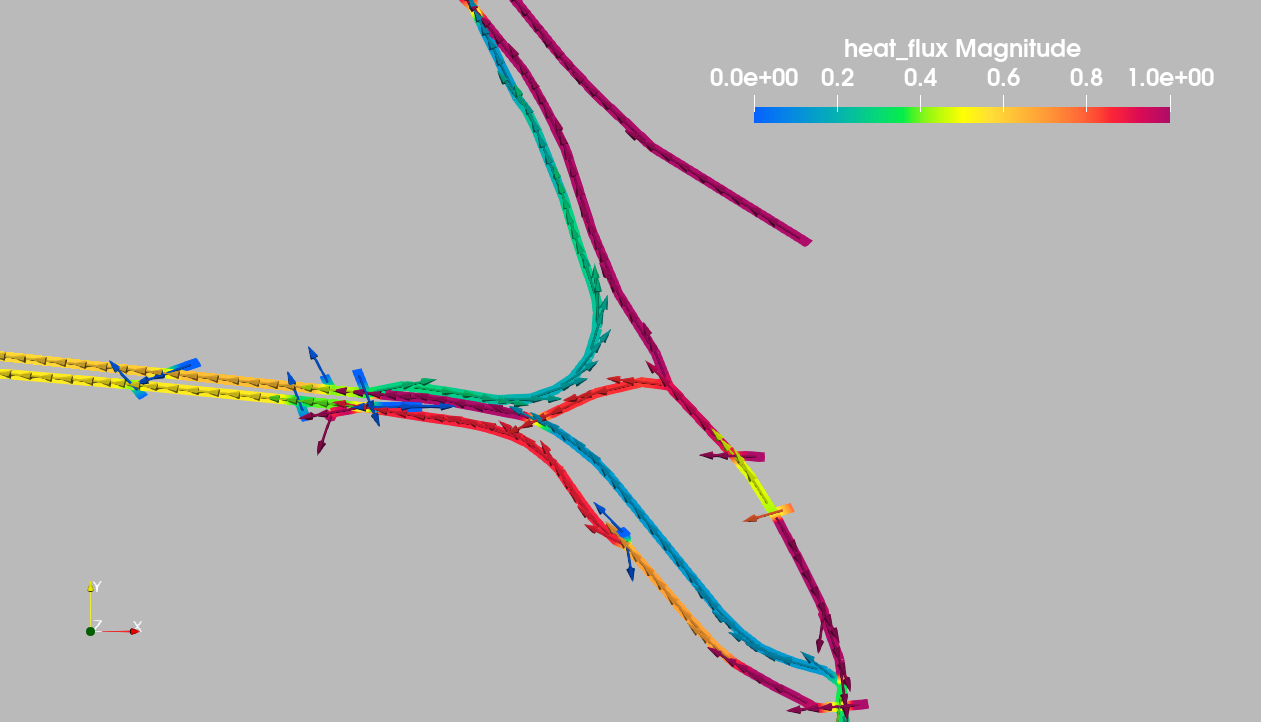}
\caption{Flux directions in two zoomed regions.
Arrowheads indicate consistent directional flow along major corridors.}
\label{fig:kaaba-flux1}
\end{figure}

\begin{figure}[h!]
\centering
\includegraphics[width=0.8\textwidth]{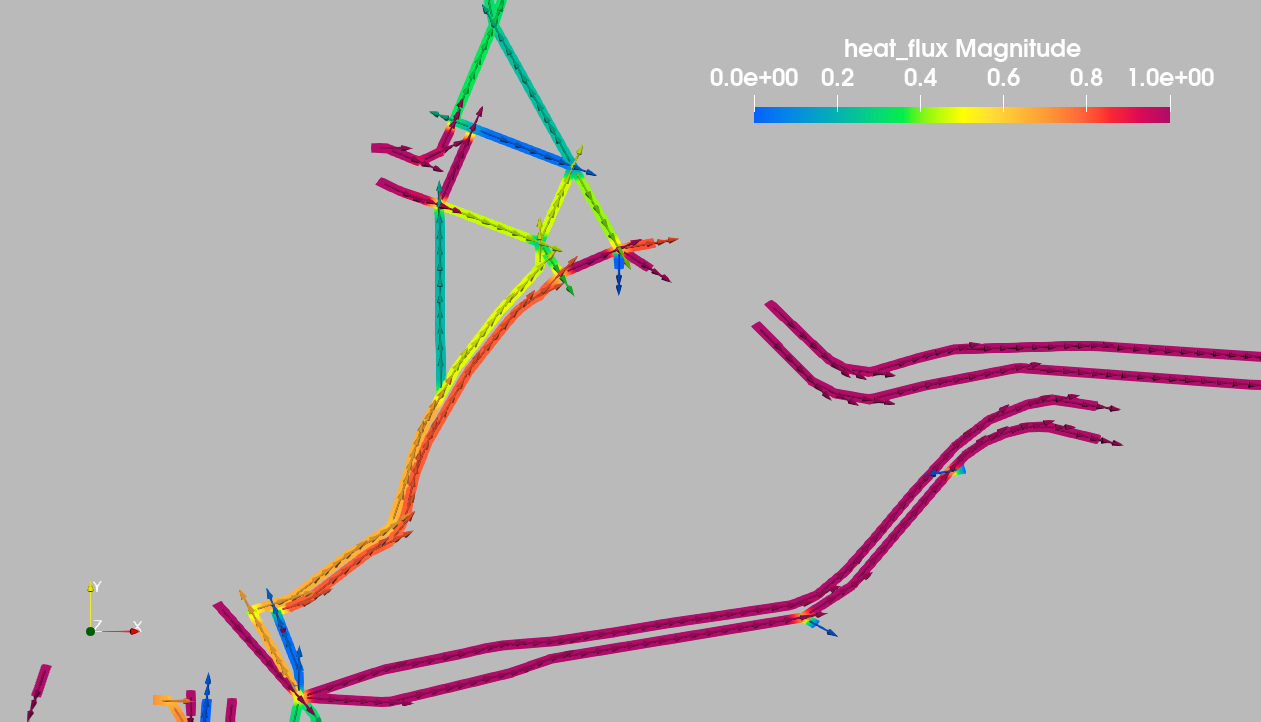}
\caption{Detailed flux orientation near junctions and intersections.}
\label{fig:kaaba-flux3}
\end{figure}

\begin{figure}[h!]
\centering
\includegraphics[width=0.8\textwidth]{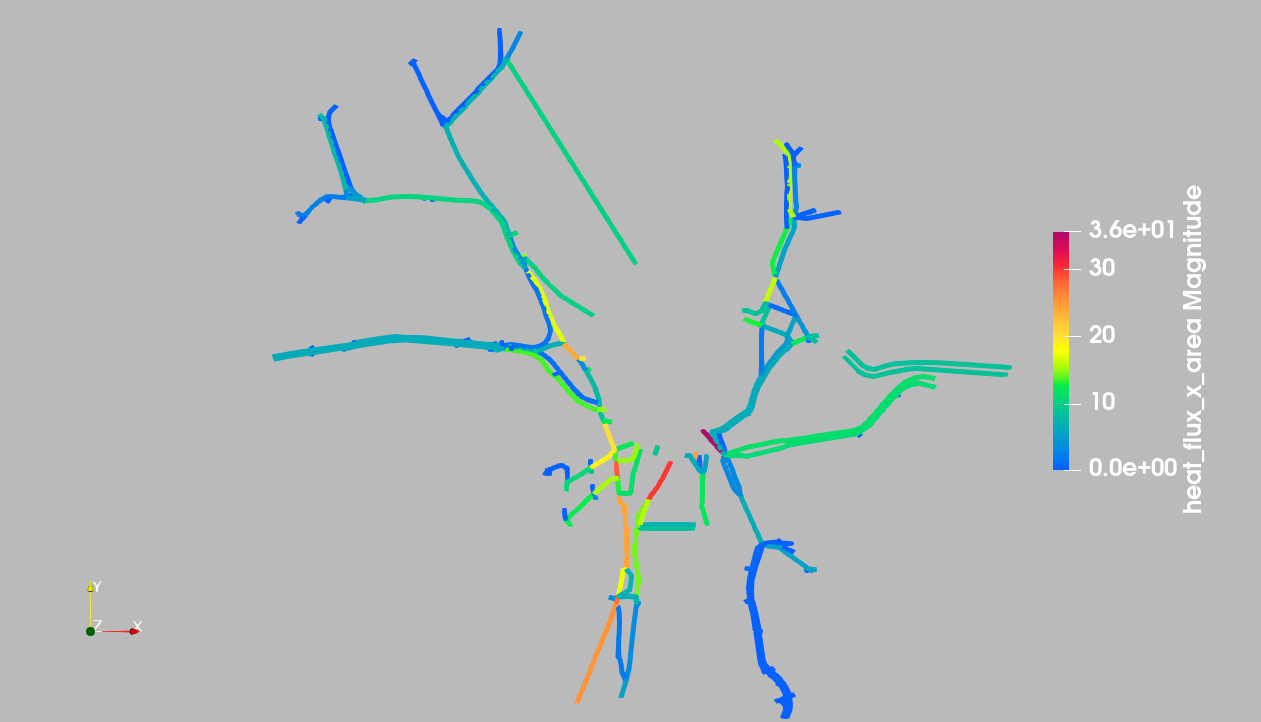}
\caption{Flux throughput ($|q_e|A_e$) across the network.
Wide corridors support proportionally higher total flux, consistent with capacity limits.}
\label{fig:kaaba-hflux-area}
\end{figure}

\subsubsection{Validation and diagnostic metrics}

All conservation and sign checks confirm the correctness of the LP solution.
Table~\ref{tab:kaaba-metrics} summarizes the diagnostic results.
Flux conservation holds to within $10^{-13}$, and boundary sign violations
are below $10^{-6}$, demonstrating consistent performance across multiple components.

\begin{table}[h!]
\centering
\setlength{\tabcolsep}{8pt}
\renewcommand{\arraystretch}{1.15}
\begin{tabular}{l r}
\toprule
\textbf{Quantity} & \textbf{Value} \\
\midrule
$\max(\Phi_{\text{in}})$ (should $\le 0$)                    & $-1.998\times10^{-15}$ \\
$\min(\Phi_{\text{out}})$ (should $\ge 0$)                   & $-3.492\times10^{-6}$ \\
Max boundary–edge sign violation (should $\le 0$)            & $3.49\times10^{-6}$ \\
Global conservation $\sum_v \Phi_v$ (should $\approx 0$)     & $1.07\times10^{-13}$ \\
$\max_{v\in\mathcal V_{\text{int}}} |\Phi_v|$                 & $3.37\times10^{-13}$ \\
Amount entering at $\mathcal V_{\text{in}}$                  & $2.729882\times10^{+2}$ \\
Amount leaving at $\mathcal V_{\text{out}}$                  & $2.729882\times10^{+2}$ \\
In–out mismatch (should $\approx 0$)                         & $4.547\times10^{-13}$ \\
$\max_{\text{component}} \big|\sum_{v\in \text{comp}} \Phi_v\big|$ & $6.682\times10^{-14}$ \\
\bottomrule
\end{tabular}
\caption{Network validation metrics. 
All diagnostics confirm flux conservation and sign correctness
to within numerical tolerance. 
}
\label{tab:kaaba-metrics}
\end{table}

\paragraph{Key remarks.}
The network decomposes into seven connected components, each internally flux-balanced. Independent gauge fixing ($u_i=10$ per component) removes null directions in the control space, ensuring well-posedness. Flux balance and conservation hold to machine precision ($10^{-13}$), confirming numerical robustness. 

These results demonstrate that the proposed framework extends naturally
to multiply connected or fragmented networks while preserving
physical and numerical consistency.

\section*{Conclusion and Outlook}
\label{sec:conclusion}

We have presented a linear programming (LP) framework for steady-state diffusion and flux optimization
on geometric networks.  
Starting from a discrete Laplacian model, we showed that boundary potentials can be treated as control variables
that drive interior states and fluxes through an affine mapping.  
This reduction yields a finite-dimensional LP whose feasible set is polyhedral, whose convexity is guaranteed,
and whose global solvability follows from geometric arguments on recession cones and the
Minkowski–Weyl decomposition.  
Three classes of sufficient conditions---finite box bounds, two-sided flux caps with full-rank maps, and
rank-deficient configurations stabilized by sign constraints---ensure boundedness automatically.

\medskip
Numerical experiments on two real geometric networks, demonstrate the robustness and interpretability of the method.
The LP formulation enforces flux conservation and sign correctness, 
handles disconnected components without additional regularization,
and offers physically meaningful insights into crowd or transport behavior.

\medskip
Beyond its immediate applications to diffusive transport, the proposed framework provides
a foundational building block for \emph{network-based digital twins}.
The affine control-to-flux map enables:
\begin{itemize}
  \item rapid re-optimization of boundary conditions under changing environments;
  \item direct integration into higher-level PDE-constrained or multi-scale optimization loops;  
  \item uncertainty quantification through stochastic or robust LP formulations.
\end{itemize}

\medskip
Several research directions follow naturally.
A first extension concerns time-dependent diffusion or transient transport on networks,
leading to dynamic LPs or model-predictive formulations.
Second, coupling this graph-level LP with continuum PDEs at finer scales would enable
hybrid \emph{graph–PDE digital twins} for structures.

\appendix

\section{Algorithms (pseudocode)}

\begin{algorithm}[H]
\caption{Mixed-boundary outward-flux maximization (high-level)}
\label{alg:driver}
\begin{algorithmic}[1]
\Require Data file; user choice $(\Vfix,u_{\mathrm{fix}})$; parameters $\phi_{\max}$, $\varepsilon$
\State $(\matB,\matC,\matL,\Vin,\Vout)\gets \Call{BuildNetwork}{\text{data}}$     \Comment{incidence, conductance, Laplacian}
\State $(\Vctrl,\Vint,\Efix,\Ectrl,\Eint)\gets \Call{PartitionAndEmbeddings}{\Vfix,\Vin,\Vout}$
\State $(\uzero,\Umap,\qzero,\Qmap,\Phizero,\Pmap,K_{\mathrm{in}},K_{\mathrm{out}},\Phi_{0,\mathrm{in}},\Phi_{0,\mathrm{out}})
    \gets \Call{BuildAffineMaps}{\matL,\matB,\matC,\Efix,\Ectrl,\Eint,u_{\mathrm{fix}}}$
\State $\vecc^\star \gets \Call{BuildObjective}{K_{\mathrm{in}},K_{\mathrm{out}}}$     \Comment{$J=\text{const}+(\vecc^\star)^\top\vecg$}
\State $(A,b,\vecg_{\min},\vecg_{\max}) \gets \Call{AssembleConstraints}{\Qmap,\qzero,\Pmap,\Phizero, \Vin,\Vout, \phi_{\max},\varepsilon}$
\State $\vecg^\star \gets \Call{SolveLP}{-(\vecc^\star),A,b,\vecg_{\min},\vecg_{\max}}$ \Comment{\texttt{linprog}}
\State $(\vecu^\star,\vecq^\star,\vecPhi^\star)\gets \Call{RecoverState}{\uzero,\Umap,\qzero,\Qmap,\Phizero,\Pmap,\vecg^\star}$
\State \Call{VerifyAndReport}{$\Vin,\Vout,\varepsilon,\vecq^\star,\vecPhi^\star$} \Comment{node signs, edge signs, conservation}
\State \Return $(\vecg^\star,\vecu^\star,\vecq^\star,\vecPhi^\star)$
\end{algorithmic}
\end{algorithm}

\begin{algorithm}[H]
\caption{BuildNetwork}
\label{alg:build}
\begin{algorithmic}[1]
\Require Data file with vertices, edges, raw $k_e$, lengths $L_e$, boundary sets $\Vin,\Vout$
\State $c_e \gets k_e/L_e$; \quad $\matC \gets \diag(c_e)$
\State Build edge-by-node incidence $\matB$ with $-1$ at tail, $+1$ at head
\State $\matL \gets \matB^\top \matC \matB$
\State \Return $(\matB,\matC,\matL,\Vin,\Vout)$
\end{algorithmic}
\end{algorithm}

\begin{algorithm}[H]
\caption{PartitionAndEmbeddings}
\label{alg:part}
\begin{algorithmic}[1]
\Require $\Vfix,\Vin,\Vout$; $n_V$ (number of nodes)
\State $\Vctrl \gets (\Vin\cup\Vout)\setminus \Vfix$; \quad $\Vint \gets \{1{:}n_V\}\setminus(\Vfix\cup\Vctrl)$
\State Build embeddings $\Efix,\Ectrl,\Eint$ as sparse placement matrices
\State \Return $(\Vctrl,\Vint,\Efix,\Ectrl,\Eint)$
\end{algorithmic}
\end{algorithm}

\begin{algorithm}[H]
\caption{BuildAffineMaps}
\label{alg:maps}
\begin{algorithmic}[1]
\Require $\matL,\matB,\matC,\Efix,\Ectrl,\Eint,u_{\mathrm{fix}}$
\State $\matL_{ii}\gets \Eint^\top\matL\Eint$;\; $\matL_{i,\mathrm{fix}}\gets \Eint^\top\matL\Efix$;\; $\matL_{i,\mathrm{ctrl}}\gets \Eint^\top\matL\Ectrl$
\State $A_0 \gets -\matL_{ii}^{-1}\matL_{i,\mathrm{fix}}\,u_{\mathrm{fix}}$;\; $A_1 \gets -\matL_{ii}^{-1}\matL_{i,\mathrm{ctrl}}$
\State $\uzero \gets \Eint A_0 + \Efix u_{\mathrm{fix}}$;\; $\Umap \gets \Eint A_1 + \Ectrl$
\State $\qzero \gets -\matC\matB\,\uzero$;\; $\Qmap \gets -\matC\matB\,\Umap$
\State $\Phizero \gets \matB^\top\qzero$;\; $\Pmap \gets \matB^\top\Qmap$
\State $K_{\mathrm{in}} \gets \Pmap(\Vin,:)$;\; $K_{\mathrm{out}} \gets \Pmap(\Vout,:)$
\State $\Phi_{0,\mathrm{in}} \gets \Phizero|_{\Vin}$;\; $\Phi_{0,\mathrm{out}} \gets \Phizero|_{\Vout}$
\State \Return $(\uzero,\Umap,\qzero,\Qmap,\Phizero,\Pmap,K_{\mathrm{in}},K_{\mathrm{out}},\Phi_{0,\mathrm{in}},\Phi_{0,\mathrm{out}})$
\end{algorithmic}
\end{algorithm}

\begin{algorithm}[H]
\caption{BuildObjective}
\label{alg:obj}
\begin{algorithmic}[1]
\Require $K_{\mathrm{in}},K_{\mathrm{out}}$
\State $\vecc^\star \gets \begin{bmatrix}K_{\mathrm{in}}\\ K_{\mathrm{out}}\end{bmatrix}^\top \begin{bmatrix}-\mathbf{1}\\ \mathbf{1}\end{bmatrix}$
\State \Return $\vecc^\star$
\end{algorithmic}
\end{algorithm}

\begin{algorithm}[H]
\caption{AssembleConstraints}
\label{alg:cons}
\begin{algorithmic}[1]
\Require $\Qmap,\qzero,\Pmap,\Phizero,\Vin,\Vout,\phi_{\max},\varepsilon$
\State \textit{Flux caps:} $\vecq_{\max}\gets \phi_{\max}\,\vec k$ 
\State $A_{\mathrm{cap}}\gets \begin{bmatrix}\Qmap\\ -\Qmap\end{bmatrix}$;\;
       $b_{\mathrm{cap}}\gets \begin{bmatrix}\vecq_{\max}-\qzero\\ \vecq_{\max}+\qzero\end{bmatrix}$
\State \textit{Edge no-backflow:} $(A_{\mathrm{edge}},b_{\mathrm{edge}})\gets \Call{EdgeSignRows}{\Vin,\Vout,\qzero,\Qmap,\varepsilon}$
\State $A\gets \begin{bmatrix} A_{\mathrm{cap}}\\ A_{\mathrm{edge}}\end{bmatrix}$;\;
       $b\gets \begin{bmatrix} b_{\mathrm{cap}}\\ b_{\mathrm{edge}}\end{bmatrix}$
\State Choose bounds $\vecg_{\min},\vecg_{\max}$ (optional)
\State \Return $(A,b,\vecg_{\min},\vecg_{\max})$
\end{algorithmic}
\end{algorithm}

\begin{algorithm}[H]
\caption{EdgeSignRows (build edge-level no-backflow)}
\label{alg:edge}
\begin{algorithmic}[1]
\Require Graph orientation (tails/heads via $\matB$ indices), sets $\Vin,\Vout$, affine $(\qzero,\Qmap)$, slack $\varepsilon$
\State Build a sparse selector $S$ with one row per boundary endpoint rule:
\Statex \quad if $t\in\Vin$, add row picking $+q_e$;\; if $h\in\Vin$, add row picking $-q_e$;
\Statex \quad if $h\in\Vout$, add row picking $+q_e$;\; if $t\in\Vout$, add row picking $-q_e$
\State \Return $A_{\mathrm{edge}} \gets -S\,\Qmap$;\quad $b_{\mathrm{edge}} \gets S\,\qzero + \varepsilon$
\end{algorithmic}
\end{algorithm}


\begin{algorithm}[H]
\caption{RecoverState and VerifyAndReport}
\label{alg:recover-verify}
\begin{algorithmic}[1]
\Require $(\uzero,\Umap,\qzero,\Qmap,\Phizero,\Pmap,\vecg^\star)$, sets $(\Vin,\Vout)$, slack $\varepsilon$
\State \textbf{Recover:} $\vecu^\star \gets \uzero+\Umap\vecg^\star$;\; $\vecq^\star \gets \qzero+\Qmap\vecg^\star$;\; $\vecPhi^\star \gets \Phizero+\Pmap\vecg^\star$
\State \textbf{Node signs:} ensure $\max(\vecPhi^\star|_{\Vin})\le 0$, $\min(\vecPhi^\star|_{\Vout})\ge 0$
\State \textbf{Edge signs:} rebuild $S$; check $\min(S\vecq^\star)\ge -\varepsilon$; list any violators
\State \textbf{Conservation:} check $\abs{\sum_v \Phi^\star_v}\approx 0$
\State \Return $(\vecu^\star,\vecq^\star,\vecPhi^\star)$
\end{algorithmic}
\end{algorithm}

\bibliographystyle{plain}
\bibliography{refs}

\end{document}